\newcommand{\mathsym}[1]{{}}
\def\R{{\mathbf R}}
\newtheorem{theorem}{Theorem}[section]
\newtheorem{definition}{Definition}[theorem]
\newtheorem{proposition}{Proposition}[theorem]
\newtheorem{corollary}{Corollary}[theorem]
\newtheorem{lemma}[theorem]{Lemma}
\begin{document}

\title{
The Combinatorial Data Fusion Problem\\
in Conflicted-supervised Learning
}
\author[1]{R. W. R. Darling}
\author[2]{David G. Harris}
\author[1]{Dev R. Phulara}
\author[3]{John A. Proos}
\affil[1]{Mathematics Research Group, National Security Agency, USA}
\affil[2]{Dept. of Computer Science, University of Maryland, USA}
\affil[3]{Tutte Institute for Mathematics \& Computing, Canada}

\date{\today}

\maketitle

\begin{abstract}
The \textit{best merge} problem in industrial data science
generates instances where disparate data sources place incompatible relational
structures on the same set $V$ of objects. 
Graph vertex labelling data may include (1) missing or erroneous labels,
(2) assertions that two vertices carry the same (unspecified) label,
and (3) denying some subset of vertices from carrying the same label.
\textit{Conflicted-supervised learning} applies to cases where
no labelling scheme satisfies (1), (2), and (3).
Our rigorous formulation starts from
a connected weighted graph $(V, E)$, and an independence system $\mathcal{S}$ on $V$,
characterized by its circuits, called \textit{forbidden sets}.
Global incompatibility is expressed by the fact $V \notin \mathcal{S}$. 
\textit{Combinatorial data fusion} seeks a subset $E_1 \subset E$ of maximum
edge weight so that no vertex component of the subgraph $(V, E_1)$ contains any
forbidden set. 
Multicut and multiway cut are special cases where all forbidden sets
have cardinality two. The general case exhibits unintuitive properties, shown in counterexamples.
The first in a series of papers concentrates on cases where $(V, E)$ is a tree, 
and presents an algorithm on general graphs, in which the combinatorial data fusion problem
is transferred to the Gomory-Hu tree, where it is solved using greedy set cover.
Experimental results are given. 
\end{abstract}

\begin{small}
\textbf{Keywords: }forbidden set, graph cut, data fusion, optimization, hypergraph, set cover, matroid, Gomory-Hu tree, multicut, vertex labelling, conflicted-supervised learning
\end{small}

\section{Machine learning context}

\subsection{Vertex labelling problems}
Categorical machine learning tasks may be cast as vertex labelling problems
on large, edge-weighted graphs. Examples include
(1) mapping the pixels of an image to parts of an object depicted in the image, 
(2) mapping web sites to functional categories of web services,
(3) discriminating between fraudulent and authentic credit card transactions,
(4) allocating mobile handsets to specific communication channels.
Suppose $V_o$ denotes the objects to be labelled, and $V_c$ the set of labels.
Information available to us is of three kinds:
\begin{enumerate}
\item
\textbf{Partial labelling: }
Some objects in $V_o$ already carry labels.
Express this by an edge $\{v, \phi(v)\}$ between $v \in V_o$ and its label $\phi(v) \in V_c$,
weighted to reflect strength of association.

\item
\textbf{Label equality: }
For some pairs $u, v \in V_o$, a weighted edge $\{u, v\}$ expresses evidence that $u$ and $v$
should carry the same (as yet undeclared) label.

\item
\textbf{Prohibitions: } There are some subsets of $V_o$ called \textbf{forbidden sets}.
Not all vertices in a forbidden set may be assigned the same label. 
In data base contexts, for example, such prohibitions occur
 when there is evidence that two objects with similar or identical names are actually distinct.

\end{enumerate}

\subsection{Conflicted-supervised learning}
The three kinds of information mentioned above typically come from disparate sources, and
may partially contradict each other. In
Table \ref{t:ml}, we call this \textbf{conflicted-supervised learning}.
The data scientist is asked to perform a \textit{best merge}
of these data sources to arrive at a vertex labelling. 

In a semi-supervised learning graph problem, by contrast, few vertices
have been labelled, and the task is to extend this labelling to all vertices in a way
that respects the adjacency structure, as in \cite{zha}. Such label \textbf{propagation} in graphs 
can be achieved through random walks, belief propagation, semi-definite programming, etc.; 
see references in Darling \& Velednitsky \cite{dar}.

Combinatorial data fusion, as introduced here, is a rigorous approach to conflicted-supervised learning.
By deleting graph edges, break the graph
into components, each of which contains at most one of the label vertices in $V_c$.
If $v \in V_o$ is in a component with a unique $V_c$ vertex then $v$ receives that as its label.  For a component without a $V_c$ vertex a new distinct label is created and all vertices within that component receive the new label. Note that the label received by a vertex may not agree 
with the vertex's original label, and some label vertices may end up isolated.
The aim is to minimize the total weight of deleted edges of types 1. and 2.,
so to enforce all the prohibitions in 3.
Instead of label propagation, existence of forbidden sets leads to
label \textbf{dissociation}. The approach taken here builds on the machinery of graph cuts.

\begin{table}
\centering
\begin{tabular}{ |c|c| }
\hline
Problem Class & Label Amount \\
\hline
\textsc{Unsupervised Learning} & none \\
\textsc{Semi-supervised Learning} & partial \\
\textsc{Supervised Learning} & all \\
\textsc{conflicted-supervised Learning} & excess $+$ conflict \\
\hline
\end{tabular}
\caption{Combinatorial data fusion is a rigorous approach to conflicted-supervised learning. } \label{t:ml}
\end{table}

\subsection{Example: product categorization as combinatorial data fusion}
For the sake of concreteness, here is an over-simplified example, which we abstract in
Section \ref{s:anomaly}. You are the manager
of an online market place in which various vendors offer products for sale, whose union is a set $V_o$.
Vendors are expected to label each item in $V_o$ with one of a set $V_c$ of exclusive
categories, but some vendors fail to do so, or label incorrectly, perhaps because the categories
are difficult to understand. Set $V:=V_o \cup V_c$ as a vertex set, with two types of weighted edges:
\begin{enumerate}
\item
If vendor $k$ assigns product $v \in V_o$ to category $c :=\phi_k(v)\in V_c$, place weight
$w_k > 0$ on edge $\{v, c\}$. Less reliable vendors receive less weight.
\item
If transaction data indicate that $m$ buyers have bought both $v, v' \in V_o$,
assign weight $w_m' > 0$ to edge $\{v, v'\}$, where $m \mapsto w_m'$ is increasing.
The idea is that common buyers suggest two items belong to the same category.
\end{enumerate}
In exclusive categorization, the forbidden sets are $\{ \{c, c'\}, c \neq c' \in V_c\}$, and
combinatorial data fusion becomes a multiway cut problem, discussed in Section \ref{s:mmwcut}.
When categories are difficult to understand, a forbidden set might take the form $\{c, c', c''\}$,
meaning that the three categories are not identical, but possibly two out of three
may coincide. In summary, the goal is best merge of vendor labelling data with sales transaction
data. The output of the algorithm will be an assignment of each product to a category, possibly different
to the one assigned by the vendor.

\subsection{Forthcoming works}
The present work covers fundamental definitions and functorial properties, and
solution methods when the underlying graph is a tree, or is approximated by a tree.
Future works will cover linear programming relaxations,
and cases where forbidden sets are not given explicitly, but are returned by an
oracle which decides whether a system of linear inequalities has a feasible point.

\section{Problem statement}

\subsection{Circuits of an independence system}

Given a set $V$, and a collection $\mathcal{S}$ of subsets of $V$, we call 
$(V, \mathcal{S})$ an \textbf{independence system} if
$U \in \mathcal{S}$ and $W \subset U$ implies $W \in \mathcal{S}$, and $\emptyset \in S$.
Elements of $\mathcal{S}$ are called \textbf{independent sets}. 
A subset $W \subset V$ which is not in $\mathcal{S}$ is called a \textbf{dependent set}.
Minimal dependent sets are called \textbf{circuits}. 
The collection of circuits of an independence system $(V, \mathcal{S})$ is denoted $\mathbf{C}(\mathcal{S})$.
The next assertion is part of Korte and Vygen \cite[Theorem 13.12]{kor}.

\begin{lemma} \label{l:circuits}
Let $\mathcal{F}$ be a collection of non-empty subsets of $V$, such that $F_1, F_2 \in \mathcal{F}$
and $F_1 \subseteq F_2$ implies $F_1 = F_2$. 
Let $\mathcal{S}$ consist of those $U \subset V$ which contain no element of $\mathcal{F}$
as a subset. Then $(V, \mathcal{S})$ is an independence system, whose set of circuits
$\mathbf{C}(\mathcal{S}) = \mathcal{F}$.
\end{lemma}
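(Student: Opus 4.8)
The plan is to verify the three requirements in turn: that $(V,\mathcal{S})$ is an independence system, that every member of $\mathcal{F}$ is a circuit of it, and that conversely every circuit lies in $\mathcal{F}$. Each step is a short set-theoretic argument, so this is really a matter of bookkeeping rather than of finding an idea.

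First I would check the independence-system axioms. Since every element of $\mathcal{F}$ is non-empty, the empty set contains no element of $\mathcal{F}$ as a subset, so $\emptyset \in \mathcal{S}$. For downward closure, suppose $U \in \mathcal{S}$ and $W \subseteq U$; if $W$ contained some $F \in \mathcal{F}$, then by transitivity of inclusion $F \subseteq U$, contradicting $U \in \mathcal{S}$. Hence $W \in \mathcal{S}$, and $(V,\mathcal{S})$ is an independence system.

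Next, to show $\mathcal{F} \subseteq \mathbf{C}(\mathcal{S})$, fix $F \in \mathcal{F}$. It is dependent, since it contains itself as a subset and thus is not in $\mathcal{S}$. To see it is a \emph{minimal} dependent set, take any proper subset $W \subsetneq F$ and suppose $W \notin \mathcal{S}$; then $W$ contains some $F' \in \mathcal{F}$, and $F' \subseteq W \subsetneq F$ forces $F' \subsetneq F$, contradicting the hypothesis that $\mathcal{F}$ is an antichain (which demands $F' = F$). So every proper subset of $F$ lies in $\mathcal{S}$, and $F$ is a circuit. For the reverse inclusion, let $C \in \mathbf{C}(\mathcal{S})$. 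Being dependent, $C$ contains some $F \in \mathcal{F}$; but $F$ is itself dependent, and the minimality of $C$ among dependent subsets of $C$ forces $F = C$, whence $C \in \mathcal{F}$.

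I do not expect a genuine obstacle: the only place that really uses a hypothesis beyond elementary set manipulation is the middle step, where the antichain property of $\mathcal{F}$ is exactly what rules out a proper subset of some $F \in \mathcal{F}$ being itself a member of $\mathcal{F}$; without it the claimed equality $\mathbf{C}(\mathcal{S}) = \mathcal{F}$ would fail even though $(V,\mathcal{S})$ would still be an independence system.
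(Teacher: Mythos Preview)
Your argument is correct and is the standard verification of this fact. The paper itself does not supply a proof but simply cites the result from Korte and Vygen \cite[Theorem 13.12]{kor}, so there is no in-paper proof to compare against; your three-step check (independence-system axioms, $\mathcal{F} \subseteq \mathbf{C}(\mathcal{S})$, and the reverse inclusion) is exactly what one would expect and uses the antichain hypothesis in the right place.
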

In practical applications, the most parsimonious specification of an independence system may be its set of circuits.
Circuits are discussed further in Schrijver \cite[vol. B, Section 39.6]{sch}.

\subsection{Two combinatorial structures on the same vertex set} \label{s:2cs}

Let $V$ be a finite vertex set on which two separate combinatorial structures are defined:
\begin{enumerate}

\item
An edge-weighted graph $G = (V,E_0, w)$, with weights $w:E_0\rightarrow  (0, \infty)$. 
With no loss of generality, assume $G$ is connected\footnote{
If $G$ is not connected, then each graph component may be treated as a separate problem.}.

\item
An independence system $\mathcal{S}$, which includes all singletons\footnote{
This implies every forbidden set has cardinality at least two.
} and all edges in $E_0$ (i.e. $\{ \{v\} | v  \in V\} \cup E_0 \subset S$).
The circuits $\mathbf{C}(\mathcal{S}) =: \mathcal{F}$
are called \textbf{forbidden sets}. Call $H:=(V, \mathcal{F})$ the \textbf{forbidden hypergraph},
and call $A:=(V, \mathcal{S})$ the \textbf{allowable hypergraph}.

\end{enumerate}

\begin{figure}
\caption{ \textbf{EXAMPLE: }\textit{A connected graph with sixteen edges and eleven vertices 
was generated by a randomized method. 
Independent uniform$(0,1)$ random variables were used to assign the edge weights. 
The combinatorial data fusion problem, with  
forbidden sets $\{v_1, v_2, v_3\}$ and $\{v_5, v_8\}$, was solved by exact search, giving
a subgraph whose edges are shown with solid lines; omitted edges 
(total weight $2.648$) are shown as dashed lines. 
Components of this subgraph were  $\{v_1, v_7, v_8, v_{10}, v_{11}\}$, and  $\{v_2, v_3, v_4, v_5, v_6, v_9\}$,
respectively.
The edge $\{v_1, v_3\}$ has maximum weight $0.944421$ out of all sixteen edges, 
but is absent from the optimum subgraph.
This example demonstrates that Best-In Greedy Algorithm,
which would have included $\{v_1, v_3\}$ as its first choice, need not deliver an optimum.
}
} \label{f:greedynonopt}

\begin{center}
\scalebox{0.375}{\includegraphics{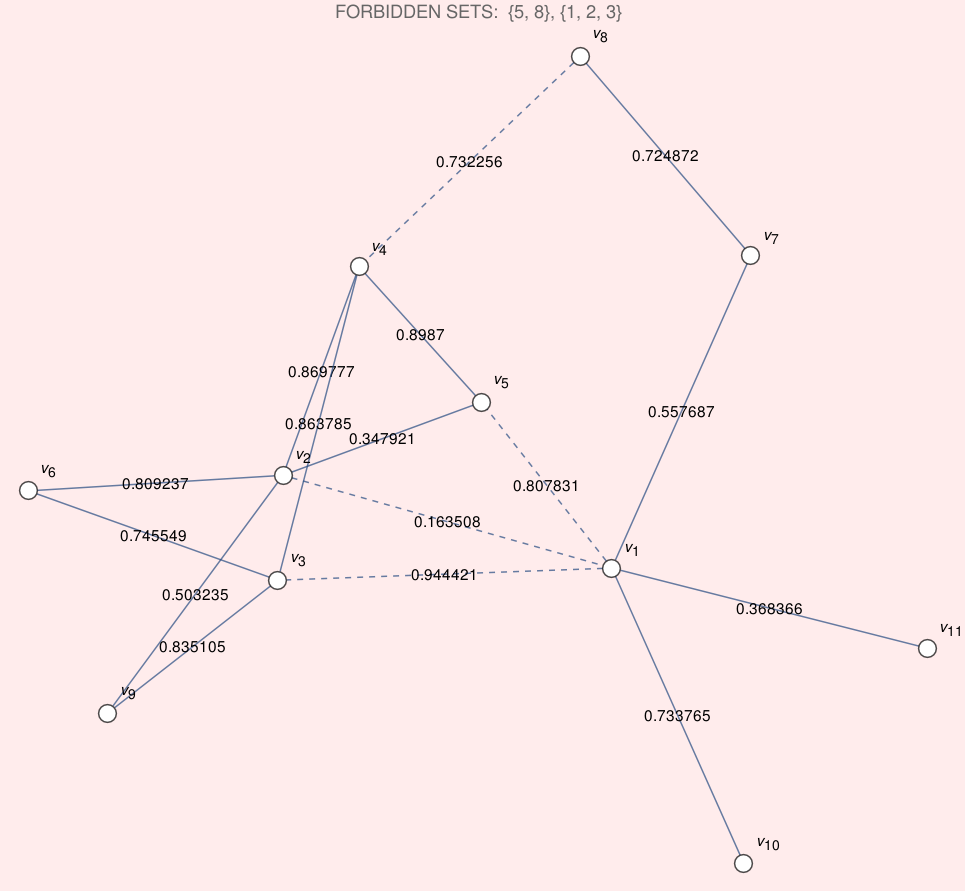}} 
\end{center}
\end{figure}

\subsection{Combinatorial data fusion problem}
What we call \textbf{data fusion} occurs between the forbidden hypergraph 
$H=(V, \mathcal{F})$, and the weighted graph $G = (V,E_0, w)$. More specifically, the data fusion problem is looking for an optimal, with
respect to $w$, subgraph of $G$ which avoids all the structures that the forbidden hypergraph disallows. To make this precise, three
 combinatorial formulations of the same problem are given below.

\subsubsection{Maximum subgraph whose components include no forbidden sets} \label{s:msg}
A subgraph formulation of the problem is as follows.

 A subset $E_1 \subseteq E_0$ induces a partition of $V$ into disjoint graph components 
$V_1 \cup \cdots \cup V_d$, for some $d \geq 1$.
Take $\mathcal{E}$ to be those subsets $E_1 \subset E_0$, called \textbf{feasible}
subgraphs, such that none of the 
induced graph components of $(V, E_1)$ contains a forbidden set. 
Then $\mathcal{E}$ is an independence system, but typically not a matroid, as Figure \ref{f:extraconstraintcounterex} shows.
We seek $E_1 \in \mathcal{E}$ to maximize the sum of weights:
\begin{equation} \label{e:mp4is}
\sum_{e\in E_1}w(e).
\end{equation}
This is an instance of what Korte and Vygen \cite{kor} call
the \texttt{maximization problem for independence systems}.

The subgraph formulation leads naturally to a
\textbf{Best-In Greedy Algorithm}: starting with the empty graph on $V$,
insert edges of $E_0$ in decreasing weight order, omitting any edge whose insertion
would create a component containing a forbidden set.
The failure of this algorithm to deliver an optimum solution
is shown in Figure \ref{f:greedynonopt}. For the example in Figure \ref{f:greedynonopt} 
the optimal $E_1$ is
$E \setminus \{\{v_1,v_2\}, \{v_1,v_3\},\{v_1,v_5\},\{v_4,v_8\}\}$.

\subsubsection{Optimum coloring of the forbidden hypergraph} \label{s:fhg}
A coloring formulation of the problem is as follows.

A \textbf{hypergraph coloring} of $(V, \mathcal{F})$
means a mapping $\chi: V \to \mathbb{N}$ such that no hyperedge in $\mathcal{F}$ is monochromatic;
in other words, there is no integer $j$ or forbidden set $F$ for which $F \subseteq \chi^{-1}(j)$.

The optimization problem is to find a hypergraph coloring $\chi: V \to \mathbb{N}$
for which the total weight under $w:E_0\rightarrow \mathbb (0, \infty)$ of edges with differently
colored endpoints is minimum. The objective function is
\begin{equation} \label{e:hgcof}
\min_{\chi} \sum_{\substack{
 \{u, v\} \in E_0 \\
 \chi(u) \neq \chi(v)}
} w(\{u, v\}).
\end{equation}
As the sum of weights of all edges in $E_0$ is fixed, an equivalent objective function is  
\begin{equation} \label{e:hgcof_max}
\max_{\chi} \sum_{\substack{
 \{u, v\} \in E_0 \\
 \chi(u) = \chi(v)}
} w(\{u, v\}).
\end{equation}

For the example in Figure \ref{f:greedynonopt} the optimum coloring is
\[
\chi(v_i) = 
\begin{cases}
1 & \textrm{if } i \in \{1,7,8,10,11\}, \\
2 & \textrm{if } i \in \{2,3,4,5,6,9\}.
\end{cases}
\]
Comparing this optimum coloring to the optimum subgraph given in Section \ref{s:msg} 
we see that the optimum coloring
 assigns a distinct color to each connected component of the optimum subgraph and that the 
edges with different colored endpoints are exactly the edges removed from $E$ to form the optimum subgraph.

The coloring formulation leads naturally to a
 \textbf{Greedy Coloring Algorithm}, extending the Greedy Graph Coloring in
 Korte and Vygen \cite[Section 16.2]{kor}. Order\footnote{
Possibly at random. Or possibly depending on total weight of incident edges, 
or on vertex degree in the forbidden hypergraph.
} the vertices as $V:= \{v_1, v_2, \ldots, v_n\}$.
For each vertex $v_i$, let $\mathcal{F}_i \subset \mathcal{F}$ consist of those forbidden sets $F$
for which $v_i$ is the highest numbered vertex; in other words,
$F \ni v_i$ and
\[
F \cap \{v_{i+1}, \ldots, v_n\} = \emptyset.
\]
Start by assigning $\chi(v_1) = 1$. 
For $1 < i \leq n$, let $C = \{1, \dots, k \}$ be the current set of colors used and let $C' \subset C$ be those colors $c$ for which there is no $F \in \mathcal{F}_i$ with all the vertices in $F\setminus \{v_i\}$ colored $c$. There are two cases: (1) If $C' = \emptyset$ then assign $\chi(v_i)$ the new color $k+1$; (2) Otherwise assign $\chi(v_i)$ a $k \in C'$ which maximizes the total weight of edges with endpoints of the same color. 
This procedure guarantees that no forbidden set will be monochromatic. 
This coloring is parsimonious rather than greedy, in the sense that introduction of new
colors is delayed until it is inevitable. When implemented on
the example in Figure \ref{f:greedynonopt}, using the given vertex order, the result is
\[
\chi(v_i) = 
\begin{cases}
1 & \textrm{if } i \in \{1,2,4, 5, 6, 7,10,11\}, \\
2 & \textrm{if } i \in \{3,8,9\}.
\end{cases}
\]

\subsubsection{Maximum weight matching of the allowable hypergraph} \label{s:mwm}
A \textbf{matching} of a hypergraph is a collection of
pairwise disjoint hyperedges. A matching is called a \textbf{perfect
matching} if every vertex of the hypergraph is contained in a matching edge. A
matching formulation of the problem is as follows.

Use the edge-weighted graph $G = (V,E_0, w)$ to turn the allowable hypergraph
into a \textbf{weighted hypergraph} $A_w:=(V,\mathcal{S},w)$,
by taking the weight $w(U)$  of $U \in \mathcal{S}$
 to be the total edge weight of the subgraph of $G$
induced on the vertex set $U \subset V$. 
Let $\mathcal{M}$ denote the set of matchings of the allowable hypergraph.
We seek a matching $M \in \mathcal{M}$ which maximizes the sum of the weights
of the matching hyperedges:
\begin{equation} \label{e:hgmwm}
w(M) := \sum_{U \in M} w(U).
\end{equation}
Since $\{v\} \in \mathcal{S}$ for all $v \in V$ by assumption, and all
such singleton hyperedges have weight zero, any matching of the allowable
hypergraph can be extended to a perfect matching of equal weight. This allows us
to restrict our maximum weight matching problem to perfect matchings. 

Let's consider the example in Figure \ref{f:greedynonopt} again. 
The optimum perfect matching for this problem is 
$\{\{v_1,v_7,v_8,v_{10},v_{11}\},\{v_2,v_3,v_4,v_5,v_6,v_9\}\}$.
Comparing this matching to the optimum coloring given in Section \ref{s:fhg}
 we see that the optimum coloring
 mono-colors each of the hyperedges of the optimum perfect matching with a distinct color. 

The matching formulation leads naturally to a
 \textbf{Greedy Matching Algorithm}. Start with the trivial weight
zero perfect matching $M_0$ consisting of all the singleton hyperedges.
Given a (perfect) matching $M_1$ of $A_w$ and $U_1,U_2
  \in M_1$ such that $U_1 \cup U_2 \in \mathcal{S}$, the collection of hyperedges
  $M_2 := (M_1 \setminus \{U_1,U_2\}) \cup \{U_1 \cup U_2\}$ is a (perfect)
  matching of $A_w$ and $w(M_2) \geq w(M_1)$. Call the latter a merge operation, with a weight gain of 
$w(M_2) - w(M_1)$.
At each step of Greedy Matching, perform a merge operation with maximum weight gain\footnote{
This is not the same as the Best-In Greedy Algorithm
of Section \ref{s:msg} because the weight of all edges between $U_1$ and $U_2$ is considered
when computing the weight gain.
}. Stop when no further merges are possible.

\subsection{Interchangeability of three formulations}

The \textbf{combinatorial data fusion problem} $(V, E_0, w, \mathcal{F})$
refers to any of the three optimization problems above.
Indeed:

\begin{lemma} \label{l:equiv}
The maximum subgraph (Section \ref{s:msg}), optimum coloring (Section \ref{s:fhg}) and maximum matching (Section \ref{s:mwm})
formulations of combinatorial data fusion are interchangeable in the following sense:
an optimum solution to any one of the formulations yields, in linear time, feasible solutions to the other two problems, with objective
function values equal to the value of the optimum solution to the initial formulation (using the maximization version of the objective function
for the optimum coloring formulation).
\end{lemma}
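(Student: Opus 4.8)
The plan is to route all three formulations through a single notion, a \emph{partition of $V$ into allowable sets}, and to show that each formulation's objective, when restricted to such partitions, is the same function of the partition.

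First I would set up the structural dictionary. By Lemma~\ref{l:circuits}, $\mathcal{F}=\mathbf{C}(\mathcal{S})$, so a set $U\subseteq V$ is allowable (lies in $\mathcal{S}$) if and only if it contains no forbidden set. Three consequences: a subset $E_1\subseteq E_0$ is feasible in the sense of Section~\ref{s:msg} exactly when every connected component of $(V,E_1)$ is allowable; a coloring $\chi$ is feasible in the sense of Section~\ref{s:fhg} exactly when every color class $\chi^{-1}(j)$ is allowable; and a perfect matching of the allowable hypergraph is literally a partition of $V$ into allowable sets. Moreover $\mathcal{S}$ is downward closed, so any partition refining a partition into allowable sets is again one.

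Next I would compute how each formulation scores a partition. Given a partition $\mathcal{P}=\{P_1,\dots,P_d\}$ of $V$ into allowable sets, write $E(\mathcal{P})\subseteq E_0$ for the edges with both endpoints in one block and $\sigma(\mathcal{P}):=\sum_{e\in E(\mathcal{P})}w(e)=\sum_i w(P_i)$ for the total internal weight, where $w(P_i)$ is the induced-subgraph weight used in~\eqref{e:hgmwm}. Then all three formulations assign value $\sigma(\mathcal{P})$ to $\mathcal{P}$: the subgraph $E(\mathcal{P})$ is feasible because each of its components lies inside a single allowable block, hence is allowable, and its weight is $\sigma(\mathcal{P})$; the coloring giving block $P_i$ color $i$ is feasible with objective~\eqref{e:hgcof_max} equal to $\sigma(\mathcal{P})$; and $\mathcal{P}$ is a perfect matching of weight $\sigma(\mathcal{P})$. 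Conversely, the component partition $\mathcal{P}(E_1)$ of any feasible subgraph $E_1$ is a partition into allowable sets, and $w(E_1)\le\sigma(\mathcal{P}(E_1))$ with equality iff $E_1=E(\mathcal{P}(E_1))$ — here positivity of $w$ is used; the color-class partition of a feasible coloring $\chi$ is a partition into allowable sets with $\sigma$-value equal to~\eqref{e:hgcof_max}; and a matching (padded with singletons to be perfect, at no change of weight) is such a partition with $w(M)=\sigma$. Each conversion visits every vertex, edge, and hyperedge a bounded number of times, and computing $\mathcal{P}(E_1)$ is a linear-time graph search, so all run in linear time.

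Combining the two directions shows the three optimal values coincide; call the common value $\mathrm{OPT}$. To finish, given an optimum of one formulation I would extract the associated partition $\mathcal{P}$ into allowable sets (component partition, color-class partition, or the matching itself), observe $\sigma(\mathcal{P})=\mathrm{OPT}$ — immediate for the coloring and matching optima, and forced in the subgraph case by $\mathrm{OPT}=w(E_1^{\ast})\le\sigma(\mathcal{P}(E_1^{\ast}))\le\mathrm{OPT}$ — and then output $E(\mathcal{P})$, the block-coloring, and the matching $\mathcal{P}$ as the three required feasible solutions, each of objective value $\sigma(\mathcal{P})=\mathrm{OPT}$. The only genuinely delicate point, isolated above, is that $\sum_{e\in E_1}w(e)$ is not a function of the component partition alone, since a feasible subgraph may omit internal edges; so one must either augment $E_1$ to $E(\mathcal{P}(E_1))$ or, as here, invoke optimality together with $w>0$ to conclude that an optimum subgraph already contains all of its internal edges. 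Everything else is bookkeeping.
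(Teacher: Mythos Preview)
Your proof is correct and follows essentially the same approach as the paper: both arguments pass each formulation through the partition of $V$ into allowable blocks (component partition, color-class partition, or the matching itself) and check that feasibility and objective value survive each translation, with the one nontrivial step being that an optimum subgraph must already contain every $E_0$-edge internal to its components (which you isolate explicitly and the paper invokes via the phrase ``by the optimality of $E_1$''). Your version is organized more uniformly---factoring out the common partition object and the functional $\sigma(\mathcal{P})$ once, rather than repeating the construction in three separate cases---but the underlying content is the same.
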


\textbf{Remark: } An optimum may be called an \textbf{optimum hypergraph coloring}
in the context (\ref{e:hgcof}), a \textbf{maximum subgraph} in the context (\ref{e:mp4is}),
or a \textbf{maximum hypergraph matching} in the context (\ref{e:hgmwm}).


%

\begin{proof}
Let $G=(V,E_0,w)$,$H=(V,F)$ be an instance of a combinatorial data fusion problem.

Case 1: Suppose that $E_1$ is the edge set of an optimum solution to the subgraph formulation of the problem.
The objective function value of $E_1$ is $\sum_{e \in E_1} w(e)$.

Let $\chi$ be a coloring of $H$ that monocolors the vertices of each connected component of $(V,E_1)$ a distinct color. As $E_1$ was a
feasible solution to the maximum subgraph formulation of the problem, no forbidden set of $H$ is contained within a single component of $(V,E_1)$. Thus $\chi$
is a valid coloring of $H$. Additionally, the objective function value for the maximization version of the optimum coloring formulation for $\chi$ is
the sum of the $E_0$ edges whose ends are in the same component of $(V,E_1)$, which by the optimality of $E_1$ is $\sum_{e \in E_1} w(e)$.

Let $M$ be the partition of $V$ consisting of the vertex sets of the connected components of $(V,E_1)$. As no connected component of $(V,E_1)$
contains a forbidden set, $M$ is a perfect matching of the weighted allowable hypergraph $A_w$ (see Section \ref{s:mwm}). By the
definition of $A_w$, the weight of each hyperedge $m$ in $M$ equals the sum of the weights of edges of $E_0$ with both
ends in $m$.  By the optimality of $E_1$, the weight of each hyperedge $m$ in $M$ equals the sum of the weights of edges
of $E_1$ with both ends in $m$ and each edge $e \in E_1$ counts towards the weight of exactly one $m \in M$. Thus the objective function value of
$M$ is $\sum_{e \in E_1} w(e)$. 

Case 2: Suppose that $\chi$ is an optimum coloring of the forbidden hypergraph. The value of the maximization version of the objective
function for $\chi$ is 
\begin{equation*} \sum_{\substack{
 \{u, v\} \in E_0 \\
 \chi(u) = \chi(v)}
} w(\{u, v\}),
\end{equation*}
which we will denote by $Z$.

Let $E_1 = \{e \in E |  \chi(u) = \chi(v)\}$. By construction the objective function value for $E_1$ in the maximum subgraph formulation is $Z$ as
required.
Since all edges of $(V,E_1)$ join vertices with equal colors, every connected component of $(V,E_1)$ is monochromatic. Since $\chi$ is a feasible
coloring of the forbidden hypergraph it follows that no forbidden set is contained in a connected component of $(V,E_1)$.

Let $M$ be the partition of $V$ consisting of the preimages of the colors of $\chi$. As
$\chi$ is a valid coloring of the forbidden hypergraph, $M$ is a perfect matching of the
weighted allowable hypergraph $A_w$. By the definition of $A_w$, the weight of each hyperedge $m$ in $M$ equals the sum
of the weights of edges of $E_0$ with both ends in $m$.  Thus the objective function value of $M$ is $Z$.

Case 3: Suppose that $M$ is an optimum perfect matching of the weighted allowable hypergraph $A_w$. The objective function value for $M$ is 
\begin{equation*} 
\sum_{m \in M} w(m) = 
\sum_{m \in M}  \sum_{ \{u, v\} \in E_0 \cap m} w(\{u, v\}) =
\sum_{\substack{
 \{u, v\} \in E_0 \\
 M(u) = M(v)}
} w(\{u, v\}),
\end{equation*}
where $M(u)$ is the $m \in M$ for which $u \in m$. Denote this objective function value by $Z$.

Let $E_1 = \{ \{u,v\} \in E_0 | M(u)=M(v)\}$. By construction $\sum_{e \in E_1} w(e) = Z$. As all edges of $E_1$ join vertices within an $m \in M$,
if $C$ is a connected component of $(V,E_1)$ then there exists an $m \in M$ such that all vertices of $C$ are in $m$. As $M$ is a perfect matching of
$A_w$ it follows that no forbidden set can be contained in a connected component of $(V,E_1)$.

Form a hypergraph coloring $\chi$ which monocolors each $m \in M$ a distinct color. Since each $m \in M$ is a hyperedge of the allowable hypergraph,
no forbidden set is monocolored by $\chi$. The objective function value of $\chi$ is 
\[
\sum_{\substack{
 \{u, v\} \in E_0 \\
 \chi(u) = \chi(v)}
} w(\{u, v\}) = 
\sum_{\substack{
 \{u, v\} \in E_0 \\
 M(u) = M(v)}
} w(\{u, v\}) 
=Z
\]
 



Therefore the result holds in all three cases.
\end{proof}

\subsection{Special case: multi-multiway cut} \label{s:mmwcut}

Let $G = (V,E_0, w)$ be a weighted graph and let $S_1, S_2, \dots,
S_k$ be $k$ sets of vertices. Avidor \& Langberg \cite{avi} define the 
\texttt{multi-multiway cut problem} (MMC) as finding a subset $E_2 \subset E_0$ of
minimum total weight such that the $S_i$ are completely disconnected in $(V, E_0 \setminus E_2)$ 
(i.e. if $u,v \in S_i$ for some $i$ then $u$ and $v$ are
disconnected in $(V, E_0 \setminus E_2)$ ). The \texttt{multiway cut problem} is the 
sub-case where $k=1$, meaning that there is only one set of vertices that have to
be pairwise disconnected; see Chapter 8 of Williamson \& Shmoys
\cite{shm} and Chapters 4 and 19 of Vazirani \cite{vaz}. 
Dahlhaus et al \cite{dah} proved this problem to be NP-Hard.
Recently Velednitsky \& Hochbaum \cite{vel} showed that
an algorithm which gives a 2-approximation actually gives an optimum
when the graph is 2-stable.
The more familiar \texttt{multicut problem},
discussed in section \ref{s:multicut}, is the sub-case where $|S_i| = 2$ for $1
\leq i \leq k$.  This is NP-hard except when $k = 2$, and is
analyzed at length by Williamson \& Shmoys \cite{shm} and Vazirani \cite{vaz}.

Contrast the multi-multiway cut problem with the combinatorial data fusion problem
on the same $G$ where $S_1, S_2, \dots,S_k$ are the forbidden sets. For the latter,
removal of edges needs only to ensure that the $S_i$ are not connected, whereas
in the former each $S_i$ must be completely disconnected.
The multi-multiway cut problem is a special case of the combinatorial data fusion problem where the forbidden sets are all pairs of vertices contained in the $S_i$ (i.e. $\mathcal(F) = \{ (u,v) | u,v \in S_i, 1 \leq i \leq k \}$).




\subsubsection{Multi-multiway cut where the $S_i$ partition $V$}
Of interest is the subclass of the MMC data fusion problems for which the $S_i$
partition the vertex set. For this subclass there exists a surjective mapping
 from the vertex set to $k$ types, and the forbidden sets
$\mathcal{F}$ can be reduced to simply the edges $\{v, v'\}$ whose endpoints
have the same type, i.e. $\phi(v) = \phi(v')$. Thus the 
forbidden hypergraph is a graph with $k$ components, each of which is a complete
graph on the vertices of a given type.  Here we seek to cut edges of minimum total weight so no component
contains more than one vertex of any type.

\textbf{Human resources example: } A collection of ships is awaiting departure at a seaport.
Each ship has a captain, but lacks other officers. The rules say that no ship may contain
more than one exemplar of each of the officer types: navigator, cook, surgeon, carpenter, and purser.
Officers of these types are available at the seaport. The conviviality score between
two officers (of different types) is the number of previous voyages they have taken together.
The task is to hire officers for the ships, so as to maximize the total of
conviviality scores among officers on the same ship. Here vertices are officers,
type is officer type, and the coloring $\chi$ assigns each officer to a ship.



\subsection{Application where forbidden set cardinality exceeds two}\label{s:multicut}
The combinatorial data fusion problem extends multicut in that
forbidden sets of cardinality larger than two are allowed.
Here is an example showing the need for this greater generality.

Bishop and Darling \cite{bis} consider sporadic observation of a moving target.
Take an {\em observation set} 
$\mathcal{O}$, which consists of a collection of pairs 
\begin{equation}\label{e:reports} 
\mathcal{O}=\left\{(t_1, K_1), (t_2, K_2), \ldots, (t_{n}, K_{n})\right\} 
\end{equation} 
where $0 \leq t_1 <  t_2 < \cdots < t_{n} \leq T$ are the {\em observation times}, and 
each $K_i$ is a convex polygon $\R^d$ under the Manhattan metric.
In that case, membership in $K_i$ can be checked using a finite system of linear inequalities,
as can the non-emptiness of the set
\begin{equation} \label{e:convex}
\Lambda_B(\mathcal{O})= \left\{(\mathbf{y}_1, 
\mathbf{y}_2, \ldots,\mathbf{y}_n) \in K_1
\times K_2 \times \cdots \times K_n:
\max_{1\leq j\leq n-1 }
\frac{\|\mathbf{y}_{j+1} -\mathbf{y}_j\|}{t_{j+1} - t_j} \leq B \right\}
\end{equation}
where $B$ is a positive speed bound. To say that $\Lambda_B(\mathcal{O})$ is 
empty is equivalent to saying that a traveler with maximum speed $B$ could not
have visited zone $K_i$ at time $t_i$ for every $i$, in which case we call the
observation set \textbf{incompatible} with $B$; otherwise it is \textbf{compatible}.

Evidently the collection of subsets $X \subset \mathcal{O}$ such that $X$ is compatible with $B$
is an independence system, and its circuits can be taken as the forbidden sets $\mathcal{F}$ in
a combinatorial data fusion problem. 
The enumeration of such circuits is the
problem addressed by Boros et al \cite{bor}.
For example, a forbidden set of size three
consists of three observations $(t_1, K_1), (t_2, K_2), (t_{3}, K_{3})$ which are
incompatible with $B$, while every subset of size two is compatible. 
See Figure \ref{f:3incompatible} for a planar example under the Euclidean metric.

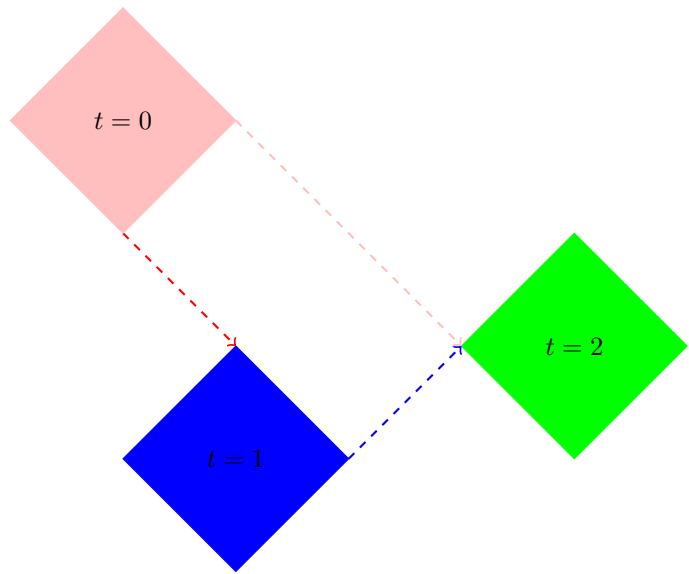
\begin{figure}
\caption{ \textbf{CIRCUIT OF SIZE THREE: }
\textit{
A traveler is observed in the left diamond $K_0$ at time $t=0$, the
lower diamond $K_1$ at time $t=1$, and the right diamond $K_2$ at time $t=2$. The
 two shorter dashed arrows each indicate the unique $\ell_1$ or $\ell_2$
 paths between respective diamonds that can be traveled in one time unit. 
The long dashed arrow is the only possible path from $K_0$
to $K_2$ in two time units.
All the pairs $\{(i, K_i), (j, K_j)\}$, for $0 \leq i < j \leq 2$,
 are compatible with this speed bound, but the triple 
$\{(0, K_0), (1, K_1), (2, K_2) \}$ is incompatible.
} 
}\label{f:3incompatible}
\begin{center}

\begin{tikzpicture}[scale=1.5]
\draw [color=blue, fill] (2.5,-1) -- (1.5,-2) -- (2.5,-3) -- (3.5,-2) -- (2.5,-1);
\node at (2.5,-2) {\footnotesize{$t = 1$}};
\draw [color = red, thick, dashed, ->] (1.5,0) -- (2.5,-1);
\draw [color=green, fill] (4.5,-1) -- (5.5,0) -- (6.5,-1) -- (5.5,-2) -- (4.5,-1);
\node at (5.5,-1) {\footnotesize{$t = 2$}};
\draw [color = blue, thick, dashed, ->] (3.5,-2) -- (4.5,-1);
\draw [color=pink, fill] (1.5,0) -- (2.5,1) -- (1.5,2) -- (0.5,1) -- (1.5,0);
\node at (1.5,1) {\footnotesize{$t = 0$}};
\draw [color = pink, thick, dashed, ->] (2.5,1) -- (4.5,-1);
\end{tikzpicture}

\end{center}
\end{figure}

At least two types of combinatorial data fusion problem appear in this context.
\subsubsection{Maximum set of compatible observations}
Suppose we are sure that all observations refer to the same traveler, but
 some observations must be wrong, because $\mathcal{O}$ itself is not compatible with $B$.
Construct an edge-weighted star
$(V, E, w)$, with the $\{(t_i, K_i), 1 \leq i \leq n\}$ as leaf vertices, and an auxiliary
vertex $v_o$ of degree $n$. The weight $w_i$ on the edge $\{v_o,  (t_i, K_i)\}$
measures trust in the reliabity of the observation, in some additive way.
The forbidden hypergraph $(V, \mathcal{F})$ consists of incompatible circuits
mentioned above. The combinatorial data fusion problem consists of finding
a set of compatible observations, namely those in the component of $v_o$,
of maximum total weight.

\subsubsection{Distinguishing travelers}
Tag each $(t_i, K_i)$ with a label $a_i \in V'$, where $V'$ is a set of
name strings. We believe $(t_i, K_i, a_i)$ is correct, but we do not know whether the name strings
$V'$ all refer to the same traveler, or to several different travelers. Create a connected
edge-weighted graph $(V', E, w)$, where the weight $w({u,v})$ on a pair of name strings $u, v$
represents the strength of external evidence that $u$ and $v$ refer to the same person,
such as similarity between the two strings. The forbidden sets $\mathcal{F}$
are the circuits in the independence system, whose elements are those $U \subset V'$
such that $\{(t_i, K_i), a_i \in U\}$ forms an observation set compatible with $B$.
In an optimum hypergraph coloring $\chi:V' \to \mathbb{N}$
for the combinatorial data fusion problem $(V', E, w, \mathcal{F})$, 
name strings with the same color
refer to observations which could refer to the same traveler. 
The number of distinct colors is the number of distinct travelers.
As we shall see in Section \ref{s:mincolnotopt}, 
this optimum need not correspond to a minimum number of distinct
travelers compatible with the observations. In such a context $V'$ could be small, while
$n$ could be large; hence determining the forbidden sets may be much more work than
solving the combinatorial data fusion problem itself. 

\section{Category theory perspective on combinatorial data fusion }

\subsection{Category of partitioned sets}

Carlsson and M{\'e}moli \cite{car} propose that a clustering algorithm should be a functor 
from a suitable category of finite metric spaces
into a suitable category of partitioned sets\footnote{
Carlsson and M{\'e}moli call this the category of outputs of standard clustering schemes.}.
Their categorical definitions are designed to achieve the following algorithmic property.
Suppose $\phi:X \to Y$ is an injective, non-expansive map between finite metric spaces.
Such a map could arise, for example, when a data set $X$ is augmented by additional data points.
Suppose points $x, x' \in X$ are assigned to the same cluster when a specific clustering algorithm
is applied to $X$; then $\phi(x), \phi(x') \in Y$ should be in the same cluster when the same algorithm is applied to $Y$.

Use the term \textbf{partitioned set} to denote a pair $(X, \mathcal{P})$, 
where $X$ is a set, and $\mathcal{P}$ is a partition of $X$. View $\mathcal{P}$ as a collection
of subsets $\{X_\alpha \}_{\alpha \in A}$ of $X$ which we call the
\textbf{blocks} of the partition.

Define the \textbf{category of partitioned sets} as follows.
Objects are partitioned sets $(X, \mathcal{P})$ with $X$ finite.
The morphisms  from $(X, \mathcal{P})$ to $(Y, \mathcal{Q})$ consist of set mappings $\phi:X \to Y$ 
with the property\footnote{
Equivalent to: $\mathcal{P}$ is a refinement of the pullback of $\mathcal{Q}$ under $\phi$.
}: if points $x, x' \in X$ are in the same block of $\mathcal{P}$,
then $\phi(x), \phi(x') \in Y$ are  in the same block of $\mathcal{Q}$.

Carlsson and M{\'e}moli's program inspired us to ask whether 
there exists some notion of morphism between a pair of combinatorial data fusion problems,
such that data fusion algorithms are functors into the category of partitioned sets.

\subsection{Morphisms of graphs and of independence systems}
Suppose $(V, E)$ and $(V', E')$ are graphs, while $(V, \mathcal{S})$ and 
$(V', \mathcal{S}')$ are independence systems, also called abstract simplicial complexes.

\begin{definition}
An injective map $\phi:V \to V'$ is called a \textbf{graph morphism} if $\{\phi(u), \phi(v) \} \in E'$
whenever $\{u, v\} \in E$, and a \textbf{morphism of simplicial complexes} if $\phi(X) \in \mathcal{S}'$
whenever $X \in \mathcal{S}$. If map $\phi:V \to V'$ is injective and has both these properties,
call it a \textbf{weak morphism} of combinatorial data fusion problems:
\[
\phi: (V, E, \mathcal{S}) \to (V', E', \mathcal{S}').
\]
\end{definition}
Observe that the weight functions on $E$ and $E'$ play no part in this definition.

\subsection{Failure of functoriality: counterexamples}
Let $(V, E, w, \mathcal{F})$ and $(V', E', w', \mathcal{F}')$
be a pair of combinatorial data fusion problems, 
where $\mathcal{F}$ are the circuits of $\mathcal{S}$, and
$\mathcal{F}'$ are the circuits of $\mathcal{S}'$.
Let $\phi:V \to V'$ be a weak morphism of these combinatorial data fusion problems.

Suppose a partition $V_0, V_1, \ldots, V_{d-1}$ of $V$ forms a maximum weight matching
of $(V, \mathcal{S})$ under $w$, as in Section \ref{s:mwm}. Is it necessarily a refinement of
$\phi^{-1}(V_0'), \phi^{-1}(V_1'), \ldots, \phi^{-1}(V_{f-1}')$,
for some maximum weight matching
$V_0', V_1', \ldots, V_{f-1}'$ of $(V', \mathcal{S}')$ under $w'$,
by analogy with Carlsson and M{\'e}moli's paradigm?

The counterexamples of Figures \ref{f:inducedcounterex} and \ref{f:extraconstraintcounterex}
show two important kinds of mis-match between two such combinatorial data fusion problems,
linked by a weak morphism.

\begin{enumerate}

\item
Suppose $(V, E)$ is a \textbf{vertex-induced subgraph} of $(V', E')$, which means that $E$ is precisely
the edges in $E'$ with both endpoints in $V$. Suppose $\mathcal{F}' = \mathcal{F}$, 
so the only forbidden sets in $V'$ are the ones in $V$. Certainly the inclusion map
$\iota: V \to V'$ is a weak morphism.
Figure \ref{f:inducedcounterex} shows how, even with a single forbidden set $\{v_1, v_2, v_3\}$,
the unique maximum weight matching of $(V, \mathcal{S})$ is not a refinement of the 
unique maximum weight matching of $(V', \mathcal{S}')$.

\begin{figure}
\caption{ \textbf{INDUCED SUBGRAPH COUNTEREXAMPLE: }
\textit{An edge-weighted graph with five edges on $\{v_1, \ldots, v_5\}$ is shown
on the left. The only forbidden set is $\{v_1, v_2, v_3\}$.
In the optimum solution, edge $\{v_2, v_3\}$ is discarded, and the 
maximum weight matching of the allowable hypergraph is
$\mathcal{C}':= \{\{v_3, v_4\}, \{v_1, v_2, v_5\} \}$. 
For the induced subgraph on vertices $\{v_1, v_2, v_3\}$, shown on the right,
the maximum weight matching partitions the vertices into
$\mathcal{C}:= \{ \{v_1\}, \{v_2, v_3\} \}$.
Notice that $v_2$ and $v_3$ are in the same block of $\mathcal{C}$,
but in different blocks of $\mathcal{C}'$. So $\mathcal{C}$ is not a refinement of $\mathcal{C}'$.}
} \label{f:inducedcounterex}

\begin{center}
\scalebox{0.4}{\includegraphics{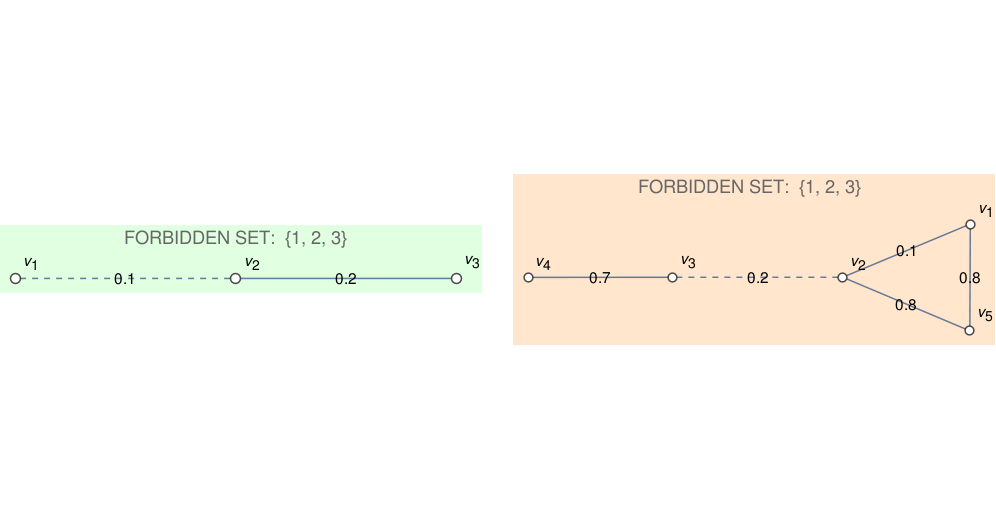}} 
\end{center}

\end{figure}

\item
Take the graphs $(V, E, w)$ and $(V', E', w')$ as identical, but $\mathcal{S} \subsetneq \mathcal{S}'$.
From the forbidden sets perspective, $\mathcal{F} \supsetneq \mathcal{F}'$
In Figure \ref{f:extraconstraintcounterex}, $|\mathcal{F}|=2$ (right side), while
$|\mathcal{F}'|=1$ (left side). The unique maximum weight matching of $(V, \mathcal{S})$ 
is not a refinement of the 
unique maximum weight matching of $(V', \mathcal{S}')$.

\begin{figure}
\caption{ \textbf{EXTRA CONSTRAINT COUNTEREXAMPLE: }
\textit{An edge-weighted graph with six edges on $\{v_1, \ldots, v_6\}$ is shown.
When $\{v_1, v_2, v_5\}$ is the only forbidden set, 
one edge is discarded (shown dashed), and the maximum weight matching of $(V, \mathcal{S}')$ is
$\mathcal{C}_1:= \{\{v_1, v_2\}, \{v_3, v_4, v_5, v_6\} \}$. 
If a second forbidden set $\{v_4, v_5\}$ is added,
then two edges are discarded, and the maximum weight matching of $(V, \mathcal{S})$ is
$\mathcal{C}_2:= \{ \{v_5, v_6\}, \{v_1, v_2, v_3, v_4\} \}$.
Notice that $\mathcal{C}_2$ is not a refinement of $\mathcal{C}_1$.} 
\textbf{MATROID COUNTEREXAMPLE: } 
\textit{The five solid edges on the left, and the four solid edges
on the right, are maximal elements of the independence system $\mathcal{E}$ defined
in Section \ref{s:msg}, when $\{v_1, v_2, v_5\}$ is the only forbidden set; since
$\mathcal{E}$ contains maximal elements of different sizes, it is not a matroid.
}
} \label{f:extraconstraintcounterex}

\begin{center}
\scalebox{0.4}{\includegraphics{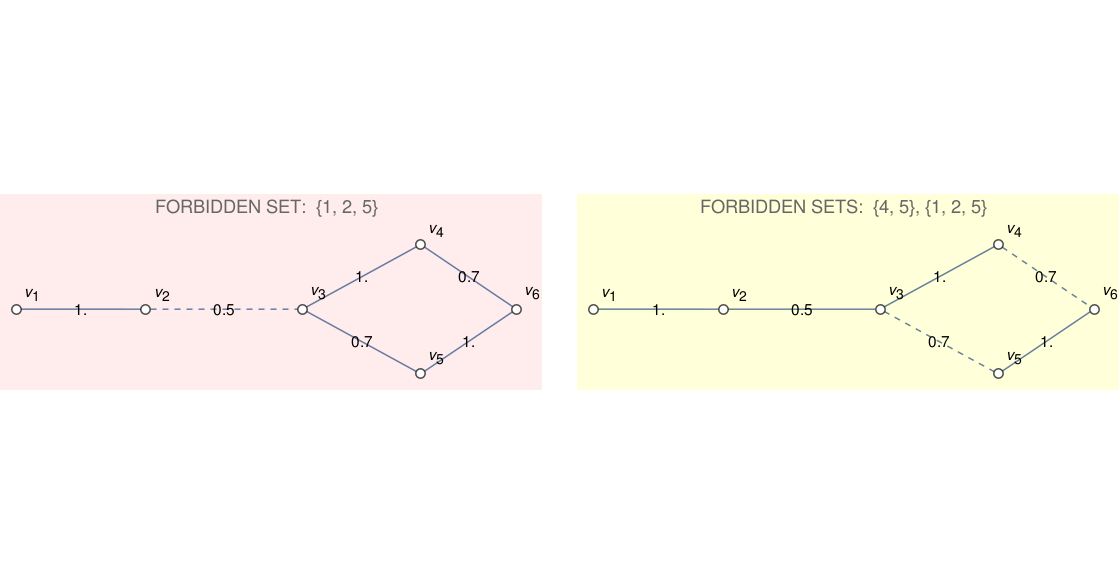}} 
\end{center}

\end{figure}

\end{enumerate}

\section{Number of colors in an optimum coloring}

\subsection{Goal}
Before studying algorithms, let us take a look at the space of optimum solutions.
In this section we shall study the number of colors that are required in an optimal coloring. 

The number of colors (or equivalently the number of connected components in a subgraph, 
or hyperedges in an optimal perfect matching) does not appear in the combinatorial data fusion objective function.
However, monochromatic $G$ edges do appear in the objective function and the more colors used in a coloring of $H$
 the harder it is to maintain monochromatic edges in $G$. Additionally, some algorithmic techniques put a limit on the 
 number of allowable colors and/or have the number of colors appear in their run time. 
 
\subsection{Lack of transversality}
It is natural to wonder about the interaction between the forbidden sets of a combinatorial data fusion problem,
and the color classes of an optimal solution. For example, 
when every forbidden set has size at least $k$, and the number of colors in
an optimum hypergraph coloring does not exceed $k$, must
each color necessarily have a representative vertex in each forbidden set?
This is tautologically true if $k = 2$, but false in general for $k \geq 3$.

Figure \ref{f:nosdr} offers a counterexample.
Even when the forbidden sets are pairwise disjoint, it is possible that 
a maximum weight matching of the allowable hypergraph partitions
 the vertices into blocks, none of which contains a set of
distinct representatives\footnote{Such a set is called a transversal.} of all the forbidden sets.

\begin{figure}
\caption{ \textbf{TRANSVERSALITY AND 2-COLORING COUNTEREXAMPLE: }
\textit{An edge-weighted graph with 18 edges on $\{v_0, \ldots, v_9\}$ is shown.
The forbidden sets are $\{v_1, v_4, v_7\}$, $\{v_2, v_5, v_8\}$, and $\{v_3, v_6, v_9\}$,
shown in different colors.
The maximum weight matching of the allowable hypergraph partitions the vertices into blocks
$\{v_0, v_2, v_6, v_9\}$,  $\{v_3, v_4, v_7\}$, $\{v_1, v_5, v_8\}$, none of which
 intersects all three forbidden sets. The forbidden hypergraph is 2-colorable,
for example by mapping $v_1, v_2, v_3$ to color 0, and other vertices to color 1,
 but the optimum coloring uses 3 colors.}
} \label{f:nosdr}

\begin{center}
\scalebox{0.45}{\includegraphics{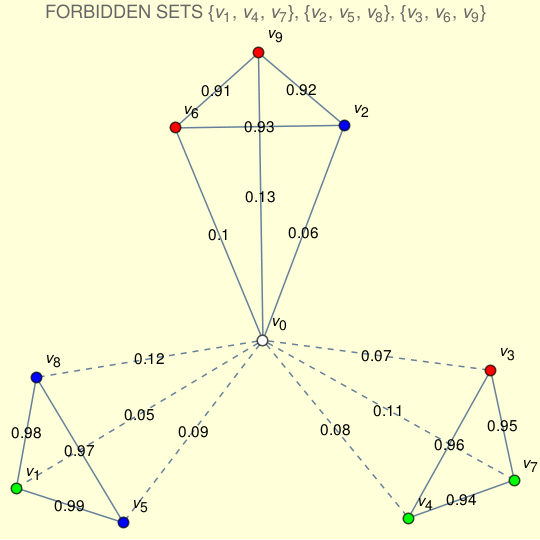}} 
\end{center}

\end{figure}

\subsection{Solution in the case of a single forbidden set}
Suppose there is a single forbidden set $F \subset V$ for the weighted graph $(V,E_0,w)$ 
and that $|F| = k \geq 2$. 
For each of the distinct non-empty subsets $K \subset F$ of size not exceeding $k/2$, 
we specialize the combinatorial data fusion problem for $(V, E_0, w, \{F\})$
to the case where the solution has two graph components, one of which contains $K$,
while the other contains $F \setminus K$. In the latter case, solve the
Minimum $\{s,t\}$ Cut problem on an augmented graph with vertex set $V \cup \{s, t\}$ where a source
node $s$ is joined to each $v \in K$ by an infinite weight edge, and a sink node $t$
is joined to each $v' \in F \setminus K$ by an infinite weight edge. 

Korte \& Vygen \cite[Ch. 8]{kor} describe the classical algorithms of Ford \& Fulkerson, and others,
for the equivalent Maximum Flow problem on the augmented graph. Having solved the problem to find a 
Minimum $\{s,t\}$ Cut, we associate with $K$ the total weight of the edges in the Minimum Cut
between $s$ and $t$. Finally choose the $K$ for which this weight is least, and declare
the edges outside this Minimum Cut to be an optimum solution.
We claim that:

\begin{proposition} \label{p:onef}
When there is only a single forbidden set $F$, with cardinality $|F| = k$,
the combinatorial data fusion problem $(V, E_0, w, \{F\})$ can be solved
by picking the least costly of $2^{k-1} - 1$
Minimum $\{s,t\}$ Cut problems on $V \cup \{s, t\}$.
\end{proposition}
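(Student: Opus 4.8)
The plan is to use the coloring formulation from Section~\ref{s:fhg} together with the structural fact that, for a single forbidden set $F$, only a very limited amount of color information actually matters. First I would argue that there is always an optimum coloring $\chi$ using at most two colors. Indeed, given any optimum coloring, pick a color class $C_1$ that contains at least one vertex of $F$ but does not contain all of $F$ (such a class exists, since $F$ is not monochromatic, so $F$ meets at least two classes); recolor every vertex outside $C_1$ with a single new common color. Merging color classes never decreases the objective~(\ref{e:hgcof_max}), since it can only turn previously bichromatic edges into monochromatic ones, and the only forbidden set $F$ remains non-monochromatic because it still meets both $C_1$ and its complement. Hence we may restrict attention to $2$-colorings, i.e.\ to partitions $V = V_1 \cup V_2$, and by Lemma~\ref{l:equiv} this is the same as the maximum-subgraph formulation with exactly (at most) two components.

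Next I would set $K := V_1 \cap F$ and observe that the only constraint a $2$-coloring must satisfy, in order to be a hypergraph coloring, is that $K \neq \emptyset$ and $K \neq F$; there is no other restriction on how $V_1, V_2$ split the rest of $V$. So the optimization decouples: fix a nonempty proper subset $K \subsetneq F$, demand $K \subseteq V_1$ and $F \setminus K \subseteq V_2$, and then choose the remaining vertices freely to maximize the weight of monochromatic edges, equivalently to minimize the weight of the cut $\delta(V_1)$ separating $V_1$ from $V_2$. That inner problem is exactly a minimum cut separating the vertex set $K$ from the vertex set $F \setminus K$, which is solved by the augmented-graph construction in the statement: add $s$ joined to all of $K$ and $t$ joined to all of $F \setminus K$ by infinite-weight edges, so any finite cut keeps $K$ on the $s$-side and $F\setminus K$ on the $t$-side, and run a standard min-$s$--$t$-cut / max-flow algorithm (Korte~\&~Vygen~\cite[Ch.~8]{kor}).

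Then I would count: the choices of $K$ are the nonempty proper subsets of $F$, of which there are $2^k - 2$; but $K$ and $F \setminus K$ give the same (unordered) partition and the same min-cut value, so we need only range over one representative from each complementary pair, namely over nonempty $K \subseteq F$ with $|K| \le k/2$ (breaking ties when $|K| = k/2$ by, say, requiring a fixed element of $F$ to lie in $K$), giving exactly $2^{k-1} - 1$ distinct min-cut instances. Taking the cheapest over all these $K$ yields the minimum possible cut weight over all valid $2$-colorings, hence over all colorings by the reduction above; the complementary edge set is then an optimum feasible subgraph, and Lemma~\ref{l:equiv} transfers it to the other two formulations.

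The main obstacle is the first step: justifying rigorously that two colors suffice. One must be careful that the merging argument preserves feasibility --- it does, precisely because there is a single forbidden set and we merge everything \emph{except} one class that already meets $F$ properly --- and that it is monotone in the objective. This monotonicity is the content of the ``merge operation'' remark in Section~\ref{s:mwm} ($w(M_2) \ge w(M_1)$), so it can be invoked rather than reproved. Everything after that is bookkeeping: the decoupling of the inner problem into an $s$--$t$ min cut is immediate once the reduction to ordered partitions $(V_1,V_2)$ with prescribed behavior on $F$ is in place, and the $2^{k-1}-1$ count is the elementary observation that complementary $K$'s are redundant.
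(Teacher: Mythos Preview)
Your proposal is correct and follows essentially the same strategy as the paper: reduce to the claim that two colors (equivalently, two components) suffice, then enumerate the $2^{k-1}-1$ bipartitions of $F$ and solve a min $s$--$t$ cut for each. The only cosmetic difference is that the paper proves the two-component fact via the subgraph formulation (Lemma~\ref{l:oneforb}: if there are more than two components, add a single edge of $E_0$ incident to some component to merge it into another without reconnecting $F$), whereas you argue in the coloring formulation by keeping one color class meeting $F$ properly and merging all the others in a single step; both are the same ``merging cannot hurt'' observation filtered through Lemma~\ref{l:equiv}.
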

Proposition \ref{p:onef} is established by the argument above, together with the fact that
 an optimum solution for $(V, E_0, w, \{F\})$ has exactly two components, which we now prove.

\begin{lemma} \label{l:oneforb}
When there is only one forbidden set, a maximum subgraph (Section \ref{s:msg}) for a
combinatorial data fusion problem $(V, E_0, w, \{F\})$ has exactly two components.
\end{lemma}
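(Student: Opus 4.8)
The plan is to show that an optimum subgraph has at most two components, combined with the trivial lower bound that it has at least two (since $G$ is connected and $F$ is forbidden, so $F$ cannot lie in a single component, forcing at least one edge deletion and hence at least two components). For the upper bound, I would argue by contradiction and a merging argument. Suppose $E_1 \in \mathcal{E}$ is optimal, and the components of $(V, E_1)$ are $V_1, \ldots, V_d$ with $d \geq 3$. Since $F$ is not contained in any single $V_j$, the forbidden set $F$ meets at least two of the components; in particular at most one component, say $V_1$, can contain $F \cap V_j$ for all... no — the key observation is simply that there exist at least two components, say $V_1$ and $V_2$, such that $F \not\subseteq V_1 \cup V_2$ is false is not what we want; rather, since $|F| \geq 2$ and $F$ is split across components, there is a component $V_i$ with $F \not\subseteq V_i$, and moreover merging two components cannot "complete" $F$ unless their union already contains all of $F$.

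More precisely, here is the merging step I would carry out. Because $d \geq 3$, pick any two components $V_i, V_j$ whose union does \emph{not} contain $F$ — such a pair exists: if $F$ is spread over components indexed by a set $T \subseteq \{1,\ldots,d\}$ with $|T| \geq 2$, then either $|T| \geq 3$, in which case merging any two of the $T$-components still omits part of $F$, or $|T| = 2$, in which case (since $d \geq 3$) there is a component outside $T$, and merging it with either $T$-component omits part of $F$. Now form $E_2 = E_1 \cup \{e\}$ where $e$ is any edge of $E_0$ between the chosen $V_i$ and $V_j$; such an edge exists because $G$ is connected, so some edge of $E_0$ crosses the partition, and if necessary re-choose the pair $V_i, V_j$ to be an adjacent pair among the valid pairs — here I would need the small observation that one can always find a pair that is both adjacent in $G$ and whose union misses $F$, which follows by taking a spanning-tree-style contraction argument on the component graph. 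Adding $e$ merges $V_i$ and $V_j$ into a component still not containing $F$, and leaves all other components unchanged, so $E_2 \in \mathcal{E}$; but $w(E_2) = w(E_1) + w(e) > w(E_1)$ since weights are strictly positive, contradicting optimality of $E_1$.

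The main obstacle I anticipate is precisely the bookkeeping in the step "there is a crossing edge $e$ whose endpoints lie in components $V_i, V_j$ with $F \not\subseteq V_i \cup V_j$." One clean way to dispatch this: contract each component of $(V,E_1)$ to a single node, obtaining a connected multigraph $G'$ on $d \geq 3$ nodes; the nodes that "carry" part of $F$ form a subset $T$ with $|T| \geq 2$. If there is any edge of $G'$ with at least one endpoint outside $T$, that edge works (merging across it cannot collect new elements of $F$ beyond what one endpoint already had, and the union still misses $F$ since $|T| \geq 2$). If every edge of $G'$ lies entirely within $T$, then connectivity forces $T = \{1,\ldots,d\}$, so $|T| = d \geq 3$; then $G'[T]$ is connected on $\geq 3$ nodes, so it has an edge $\{i,j\}$ with $T \setminus \{i,j\} \neq \emptyset$, and since $F$ meets the component indexed by any element of $T$, we get $F \not\subseteq V_i \cup V_j$. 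Either way a valid edge $e$ exists, completing the contradiction. Combining the upper bound $d \leq 2$ with the lower bound $d \geq 2$ gives exactly two components.
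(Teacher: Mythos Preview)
Your proposal is correct and uses the same merging-by-contradiction approach as the paper. The paper streamlines the edge selection: it fixes two components $C_1, C_2$ each meeting $F$, takes any third component $C_0$, and merges $C_0$ with a neighbor via an edge guaranteed by connectivity of $G$; since $C_1$ and $C_2$ then remain in distinct components, $F$ stays disconnected, so no case analysis on the contracted graph is needed.
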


\textbf{Remark: } See Theorem \ref{t:numcolorsmax} for a stronger result.

\begin{proof}
Since $F \subset V$ is forbidden, a maximum subgraph cannot have one component.
We prove that a subgraph with more than two components cannot be optimal. 

Suppose that $E_1 \subset E_0$ and that $F$ is not connected in $G_1 = (V,E_1)$. If $G_1$ has more that $2$ connected components then let $C_0, C_1, \dots, C_k$ be the connected components of $G_1$ ordered such that $C_{1}$ and $C_2$ both intersect $F$. As $G=(V,E_0)$ is connected there must exist an edge $e \in E_0 \setminus E_1$ with exactly one endpoint in $C_0$. Let $E'_1 = E_1 \cup \{e\}$ and $G'_1 = (V,E'_1)$.  The graph $G'_1$ can be formed from $G_1$ be adding the edge $e$. The effect of adding $e$ to $G_1$  will be to merge $C_0$ with one of $C_1$, \dots,  $C_k$. Thus the vertices of $C_1$ and $C_2$ remain in different components of $G'_1$ and we have that $F$ is not connected in $G'_1$. Therefore $E'_1$ is a feasible solution with a larger objective function value than $E_1$. It follows that every optimal solution has exactly two components.
\end{proof}

\subsection{Upper bound on optimum number of colors}
Here is a stronger form of Lemma \ref{l:oneforb}. Pike et. al. \cite{pik} proved

\begin{theorem} \label{t:numcolorsmax}
When a combinatorial data fusion problem has $|\mathcal{F}|=b$
forbidden sets, then there exists an optimum coloring of 
the forbidden hypergraph $H=(V,\mathcal{F})$ which uses at most $t$ colors, where $2b \geq t^2 - t$.
\end{theorem}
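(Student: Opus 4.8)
The plan is to argue by contradiction: suppose an optimum coloring $\chi$ uses the minimum possible number of colors, call it $t$, and suppose $2b < t^2 - t$, i.e.\ $b < \binom{t}{2}$. The key observation is that if $\chi$ uses $t$ colors, then for \emph{every} pair of distinct colors $\{i,j\}$ there must be some forbidden set $F$ that is "split" only between classes $i$ and $j$ — more precisely, a forbidden set $F$ all of whose vertices lie in $\chi^{-1}(i)\cup\chi^{-1}(j)$ and which meets both. Indeed, if no such forbidden set existed for some pair $\{i,j\}$, then recoloring every vertex of color $j$ with color $i$ would still be a valid hypergraph coloring (no forbidden set becomes monochromatic), and it would use only $t-1$ colors. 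Its objective value (weight of monochromatic edges) can only increase, since merging two color classes never breaks a monochromatic edge. That contradicts either the optimality of $\chi$ (if the value strictly increased) or the minimality of $t$ among optimum colorings (if the value stayed equal). So every one of the $\binom{t}{2}$ color pairs "uses up" at least one forbidden set.

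The next step is to check that a single forbidden set $F$ can be charged to at most one color pair in this scheme. This is where the definition bites: the pair charged to $F$ must be $\{i,j\}$ with $F \subseteq \chi^{-1}(i)\cup\chi^{-1}(j)$ and $F$ meeting both classes. If $F$ meets three or more color classes, it is not confined to any two-class union, so it is charged to no pair; if $F$ is confined to the union of classes $i$ and $j$ and meets both, that pair $\{i,j\}$ is uniquely determined by $F$. Hence the map "color pair $\mapsto$ a forbidden set witnessing it" is injective from the set of $\binom{t}{2}$ color pairs into $\mathcal{F}$, giving $b = |\mathcal{F}| \ge \binom{t}{2} = \tfrac{t^2-t}{2}$, i.e.\ $2b \ge t^2 - t$, contradicting our assumption and completing the argument.

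I would organize the write-up as: (i) take an optimum coloring minimizing the number of colors, call it $t$; (ii) prove the "every color pair is witnessed" lemma via the merge-two-classes-and-recolor move, invoking that merging classes never destroys a monochromatic edge and never creates a monochromatic forbidden set unless that set was already confined to the two classes; (iii) prove the witnessing assignment is injective into $\mathcal{F}$; (iv) conclude $b \ge \binom{t}{2}$.

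The main obstacle is step (ii), and specifically the bookkeeping around what "witnessed" should mean so that both the recoloring argument and the injectivity argument go through with the \emph{same} notion. One must be careful that the recoloring move really preserves validity: merging color $j$ into color $i$ makes $\chi^{-1}(i)\cup\chi^{-1}(j)$ monochromatic, so a forbidden set becomes monochromatic precisely when it was already contained in that union — hence the obstruction to this move is exactly the existence of a forbidden set confined to classes $i$ and $j$, which is the property we then use for injectivity. A secondary subtlety is the tie-breaking: if the merged coloring has \emph{equal} objective value we need $\chi$ to have been chosen with the fewest colors among all optima, so I would state that minimality assumption up front. There is also the trivial boundary case $t=1$ (then $2b\ge 0$ holds vacuously) and $t \le 2$ (then $\binom{t}{2}\le 1\le b$ since $V\notin\mathcal{S}$ forces $b\ge 1$), which I would dispatch in a sentence.
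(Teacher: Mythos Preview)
Your proposal is correct and follows essentially the same argument as the paper: take an optimum coloring with the minimum number $t$ of colors, observe that for each of the $\binom{t}{2}$ color pairs $\{i,j\}$ some forbidden set must lie in $\chi^{-1}(i)\cup\chi^{-1}(j)$ (else merging the two classes would yield an optimum coloring with fewer colors), and note these witnessing forbidden sets are distinct since each meets exactly the two classes $i$ and $j$. Your write-up is slightly more explicit about the injectivity step and the boundary cases, but the route is the same.
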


\textbf{Examples: } Two colors suffice for $|\mathcal{F}|=2$; three colors suffice for $|\mathcal{F}| \in \{3, 4, 5\}$. 

We shall elaborate on the proof from \cite{pik}.

\begin{proof}
Let $t$ be the minimum number of colors needed for an optimal solution and let $\chi$ be an optimal coloring on the
$t$ colors $\{1,2,\dots, t\}$. Let $\chi_i$ denote the set of vertices which receive color $i$.

Suppose that there exists a pair of distinct colors $i$ and $j$ such that the set $\chi_i \cup \chi_j$ does not contain 
a forbidden set. Then the coloring $\chi'$ equal to $\chi$ with colors $i$ and $j$ merged into a single color would be a 
valid coloring of $H$. By construction the objective function value of $\chi'$ must be at least that of $\chi$.
Thus $\chi'$ would be an optimal coloring of $H$ on fewer colors than $\chi$, a contradiction. 

Therefore for every pair of distinct colors $\{i,j\}$ there exists a forbidden set $f_{i,j}$ contained in $\chi_i \cup \chi_j$. As $\chi$ is a valid
coloring of $H$, no $\chi_i$ contains a forbidden set. This implies that the $\binom{t}{2}$ forbidden sets $f_{i,j}$ are distinct 
and thus $b \geq \binom{t}{2}$ as required.
\end{proof}

\begin{figure}
\caption{ \textbf{TWO FORBIDDEN SETS: }
\textit{This illustrates the $b=2$ version of Theorem \ref{t:numcolorsmax}.
Cut 1 disconnects forbidden set $F$, while cut 2 disconnects forbidden set $F'$. This gives a 3-coloring.
However we may merge the color class above cut 2 with the color class to the left of cut 1, giving cut 0, shown on the right,
which is equivalent to a 2-coloring. Cut 0 maintains feasibility, since $F$ and $F'$ both remain disconnected,
while not increasing the weight of cut edges in the graph.
}
} \label{f:2col42fs}
\begin{center}
\scalebox{0.5}{\includegraphics{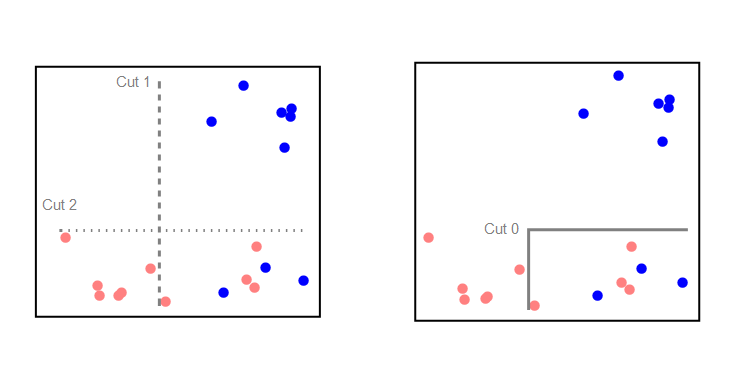}} 
\end{center}
\end{figure}

The following Corollary, which uses the technique of Theorem \ref{p:onef},
 extends the well known result that optimum multicut with two
source-sink pairs can be computed in polynomial time. We shall return to the case of 
two forbidden sets in Section \ref{s:2forbsetsgreedy}.

\begin{corollary}\label{c:2forbset}
When a combinatorial data fusion problem has exactly two forbidden sets, 
say $\mathcal{F}:=\{F,F'\}$,
an optimum coloring of $(V,\mathcal{F})$, with two colors, can be found using 
$p$ Minimum $\{s,t\}$ Cut computations on graphs with vertex set $V \cup \{s, t\}$
and $|F| + |F'|$ additional edges,
where $p$ is the number of partitions $\mathcal{P}_2$ of $F \cup F'$ into two subsets,
 both of which non-trivially intersect $F$ and $F'$.
\end{corollary}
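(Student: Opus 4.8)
The plan is to combine the two‑color bound of Theorem~\ref{t:numcolorsmax} with the single‑forbidden‑set idea behind Proposition~\ref{p:onef}: reduce the instance to finitely many minimum $\{s,t\}$‑cut problems, one per admissible two‑block partition of $F\cup F'$, and show the cheapest of them is a global optimum.

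First I would apply Theorem~\ref{t:numcolorsmax} with $b=2$. The inequality $2b\ge t^2-t$ becomes $4\ge t^2-t$, which forces $t\le 2$, so there is an optimum coloring of $(V,\mathcal{F})$ using at most two colors; a single color is infeasible because $F$ itself would be monochromatic, so every such optimum uses exactly two. By Lemma~\ref{l:equiv} it therefore suffices to find a minimum‑weight feasible two‑coloring, and then translate it into the subgraph and matching formulations.

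Next I would set up the correspondence between feasible two‑colorings and $\{s,t\}$‑cuts. A two‑coloring $\chi\colon V\to\{1,2\}$ is feasible exactly when neither $F$ nor $F'$ is monochromatic; since $F$ is then split between the two color classes, and likewise $F'$, the induced partition of $F\cup F'$ into $(F\cup F')\cap\chi^{-1}(1)$ and $(F\cup F')\cap\chi^{-1}(2)$ consists of two nonempty blocks, each meeting $F$ in a proper nonempty subset and $F'$ in a proper nonempty subset — that is, one of the $p$ partitions $\mathcal{P}_2$ named in the statement. Conversely, fix such a partition $\{P_1,P_2\}$ of $F\cup F'$ and form the augmented graph on $V\cup\{s,t\}$ by joining $s$ with weight $+\infty$ to each vertex of $F\cap P_1$ and each vertex of $F'\cap P_1$, and $t$ with weight $+\infty$ to each vertex of $F\cap P_2$ and each vertex of $F'\cap P_2$; this adds $|F|+|F'|$ edges, with parallel edges only at vertices of $F\cap F'$. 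Since $G$ is connected and finite, deleting all of $E_0$ is a finite‑weight $\{s,t\}$‑cut of the augmented graph, so a minimum $\{s,t\}$‑cut has finite weight and hence severs no infinite edge: its $s$‑side contains $P_1$ and its $t$‑side contains $P_2$. Declaring color $1$ to be the $s$‑side and color $2$ the $t$‑side gives a feasible two‑coloring whose set of bichromatic edges — the objective of equation~\ref{e:hgcof} — has weight equal to the min‑cut value; conversely every feasible two‑coloring inducing $\{P_1,P_2\}$ on $F\cup F'$ is itself an $\{s,t\}$‑cut of the augmented graph of the same weight. Hence the minimum $\{s,t\}$‑cut value for $\{P_1,P_2\}$ equals the least cut weight among all feasible two‑colorings restricting to that partition.

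Finally, ranging over all $p$ admissible partitions of $F\cup F'$ and keeping the cheapest min‑cut, then combining with the first step, produces a minimum‑weight feasible two‑coloring, which by Theorem~\ref{t:numcolorsmax} is a globally optimum coloring of $(V,\mathcal{F})$, and hence by Lemma~\ref{l:equiv} an optimum solution of the data fusion problem $(V,E_0,w,\{F,F'\})$ — at a cost of exactly $p$ minimum $\{s,t\}$‑cut computations on $V\cup\{s,t\}$ with $|F|+|F'|$ extra edges. I expect the main obstacle to be the bookkeeping in this correspondence: verifying that nothing is lost by restricting attention to the $p$ partitions of $F\cup F'$ (every feasible, in particular every optimum, two‑coloring induces one of them) and that for a fixed partition the min‑cut genuinely attains the minimum over all colorings extending it; the small but essential point is to argue the relevant $\{s,t\}$‑cut is finite so that the infinite‑weight anchor edges are respected.
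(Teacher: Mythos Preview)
Your proposal is correct and follows essentially the same approach as the paper: invoke Theorem~\ref{t:numcolorsmax} to reduce to two-colorings, enumerate the admissible bipartitions of $F\cup F'$, and solve a minimum $\{s,t\}$-cut for each via infinite-weight anchor edges. Your treatment is in fact more careful than the paper's---you explicitly verify both directions of the correspondence, argue finiteness of the min-cut, and account for parallel edges at $F\cap F'$ so that the edge count is exactly $|F|+|F'|$---but the strategy is the same.
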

\begin{proof}
Theorem \ref{t:numcolorsmax} shows that there is an optimum which is a 2-coloring.
Such an optimum partitions $V$ into $V_0 \cup V_1$, where $V_i \cap F \neq \emptyset$
and $V_i \cap F' \neq \emptyset$, for $i = 0, 1$.
To find such a partition, iterate through all partitions $\{A, B\} \in 
\mathcal{P}_2$; for each we connect a source $s$ to all vertices in $A$, and a sink $t$
to all vertices in $B$, where the new edges have infinite weight,
and find a Minimum $\{s,t\}$ Cut in the augmented graph on vertex set $V \cup \{s, t\}$. 
Having found a pair 
$\{A, B\} \in \mathcal{P}_2$ whose cut has minimum weight, set $V_0\supset A$ to 
be all vertices of $V$ in the same component as $s$, and set $V_1 \supset B$ to 
be all vertices of $V$ in the same component as $t$, respectively.
\end{proof}

\subsection{Minimum number of colors need not give optimum coloring} \label{s:mincolnotopt}

The number of colors used in an optimum hypergraph coloring
of a combinatorial data fusion problem may exceed the chromatic number
of the forbidden hypergraph. Figure \ref{f:nosdr} shows one example where 
the forbidden hypergraph is 2-colorable, but the optimum hypergraph coloring
uses three colors. Here is a simpler example of the same phenomenon.
Consider the path of length three shown in Figure \ref{f:path}, with forbidden sets 
$\{v_1, v_2\}, \{v_1,v_3,v_4\}$, and $\{v_2,v_3,v_4\}$. Two of the edges have weight 1, while
the bottom edge has weight $b > 2$. 
Different colors must assigned to $v_1$ and $v_2$, since $\{v_1, v_2\}$ is a forbidden set; without
loss of generality take $\chi(v_1)=0$ and $\chi(v_2)=1$. The optimum 2-coloring has
$\chi(v_4)=0$ and $\chi(v_3)=1$, shown on the lower left, with cost $b$.
The optimum 3-coloring has $\chi(v_4)=2 = \chi(v_3)$ shown on the lower right, with cost $2$.
Hence the best 2-coloring is $b/2$ times more expensive than the best 3-coloring, 
and this factor can be made arbitrarily large.

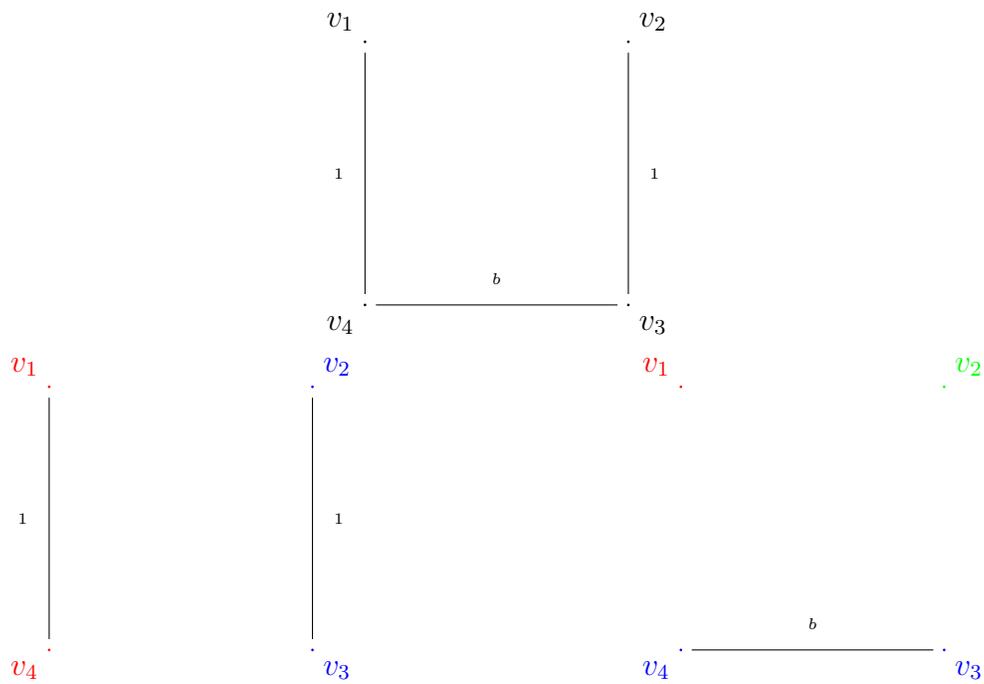
\begin{figure}
\caption{\textit{Example where the best 2-coloring is arbitrarily more expensive than the best 3-coloring}}\label{f:path}
\begin{center}
\begin{tikzpicture}[scale=1.75]
\node (v1) at (-1,1){};
\node (v2) at (1,1){};
\node (v3) at (1,-1){};
\node (v4) at (-1,-1){};

\fill (v1) circle (0.01) node [above left] {$v_1$};
\fill (v2) circle (0.01) node [above right] {$v_2$};
\fill (v3) circle (0.01) node [below right] {$v_3$};
\fill (v4) circle (0.01) node [below left] {$v_4$};

\draw (v2) -- (v3)--(v4) -- (v1);

\node at (1.2,0) {\tiny{$1$}};
\node at (0,-0.8) {\tiny{$b$}};
\node at (-1.2,0) {\tiny{$1$}};
\end{tikzpicture}

\begin{tabular}{l r}
\begin{tikzpicture}[scale=1.75]
\node (v1) at (-1,1){};
\node (v2) at (1,1){};
\node (v3) at (1,-1){};
\node (v4) at (-1,-1){};

\fill [color = red] (v1) circle (0.01) node [above left] {$v_1$};
\fill [color = blue] (v2) circle (0.01) node [above right] {$v_2$};
\fill [color = blue] (v3) circle (0.01) node [below right] {$v_3$};
\fill [color = red](v4) circle (0.01) node [below left] {$v_4$};

\draw (v1) -- (v4);
\draw (v2) -- (v3);

\node at (1.2,0) {\tiny{$1$}};
\node at (-1.2,0) {\tiny{$1$}};

\end{tikzpicture} &

\begin{tikzpicture}[scale=1.75]
\node (v1) at (-1,1){};
\node (v2) at (1,1){};
\node (v3) at (1,-1){};
\node (v4) at (-1,-1){};

\fill [color = red] (v1) circle (0.01) node [above left] {$v_1$};
\fill [color = green] (v2) circle (0.01) node [above right] {$v_2$};
\fill [color = blue] (v3) circle (0.01) node [below right] {$v_3$};
\fill [color = blue](v4) circle (0.01) node [below left] {$v_4$};

\draw (v4) -- (v3);

\node at (0,-0.8) {\tiny{$b$}};

\end{tikzpicture}\\
1. \textit{Two color solution, cut cost $b>2$} & 2. \textit{Three color solution, cut cost 2}\\
\end{tabular}
\end{center}
\end{figure}

\textbf{Open Problem: } \textit{Under what conditions on a 
combinatorial data fusion problem $(V, E, w, \mathcal{F})$
does an optimum hypergraph coloring of $(V,\mathcal{F})$ have exactly two colors?}

\textbf{Remark: } 
The literature on the question of when a hypergraph is 2-colorable includes
Seymour \cite{sey} and McDiarmid \cite{mcd}. The latter proves, using the Lov\'{a}sz Local Lemma, 
that a hypergraph, where every hyperedge contains at least $k$ points and meets at most $d$ other hyperedges,
 is 2-colorable if $e(d+2) \leq 2^k$.


\section{The combinatorial data fusion problem for trees}

Let us move on to the study of solution methods. Begin with the case where $G$ is a tree.

\subsection{Set cover formulation}\label{s:ilpf}

When $(V, E, w)$ is an edge-weighted tree with positive weights, 
the combinatorial data fusion problem $(V, E, w, \mathcal{F})$
may be formulated as a set cover problem, without
requiring the number $d$ of colors to be specified in advance.

Each forbidden set $F$ will induce a linear constraint as follows.
Let $E_F \subseteq E$ denote 
the union, over all $v, v' \in F$, of the edges on the unique path from $v$ to $v'$ in the tree $(V, E)$.
The subtree\footnote{
We will revisit this subtree in Definition \ref{d:pathdiscrim}.
} $E_F$ may be constructed in time which is linear in $|E|$:
\begin{enumerate}

\item
Create an auxiliary vertex $v_o$, and auxiliary edges $E_o:=\{\{v_o, u\}, u \in F\}$.

\item
Let $E_F'$ denote the set of edges in the 2-core\footnote{
The maximal subgraph in which all vertices have degree at least $2$.
} of the graph
\[
(V \cup \{v_o\}, E \cup E_o).
\]
Observe that, for $u, u' \in F$, every edge on a path $u$ to $u'$ is part of a cycle
passing through $v_o$, $u$, and $u'$, and hence is in $E_F'$. However an edge not on any
such path will lead to a leaf vertex in $(V \cup \{v_o\}, E \cup E_o)$, and will therefore
be missing from the 2-core.

\item
The desired edge set $E_F$ is given by:
\(
E_F = E_F' \setminus E_o.
\)

\end{enumerate}
\textbf{Constraint Matrix: }Suppose $\{e_1, e_2, \ldots, e_m\}$ are the edges of the tree, and 
$\{F_1, F_2, \ldots, F_b\}$ are the forbidden sets. 
Encode all $b$ constraints in a $b \times m$ matrix $A:=(a_{i,j})$,
where $a_{i,j} \in \{0, 1\}$, and $a_{i,j} = 1$ precisely when edge $e_j$ belongs to $E_{F_i}$,
in other words when removal of $e_j$ disconnects the vertices in $F_i$.

The combinatorial data fusion problem $(V, E, w, \mathcal{F})$ now reduces to a 
minimum weight \textbf{set cover} problem:
we seek a subset $K \subset E = \{e_1, e_2, \ldots, e_m\}$ of minimum weight
so that $K \cap F_i \neq \emptyset$ for every forbidden set $F_i$. 
Here is the set cover formulation.

\begin{proposition} \label{p:hitset}
When the graph $(V,E)$ is a tree, an optimum set of deleted edges in the
combinatorial data fusion problem $(V, E, w, \mathcal{F})$ 
is given by $\{e_j \in E: x_j = 1\}$ in the set cover problem: minimize
\begin{equation} \label{e:hitwt}
\sum_{j = 1}^m w(e_j) x_j
\end{equation}
for variables $x_j \in \{0, 1\}$, over the constraints
\begin{equation} \label{e:hitconstr}
\sum_{j = 1}^m a_{i,j} x_j \geq 1, \quad i = 1, 2, \ldots,b.
\end{equation}
\end{proposition}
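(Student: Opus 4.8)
The plan is to show the two directions of equivalence between the combinatorial data fusion problem on a tree and the set cover problem defined by the constraint matrix $A$. The core observation is the following characterization of feasibility on a tree: if $K \subseteq E$ is a set of deleted edges, then the components of $(V, E \setminus K)$ contain no forbidden set if and only if, for every forbidden set $F_i$, the set $K$ contains at least one edge of $E_{F_i}$. I would establish this equivalence carefully, since it is the heart of the proof.

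First I would argue the "only if" direction. Suppose $K$ is feasible, i.e. no component of $(V, E \setminus K)$ contains a forbidden set. Fix a forbidden set $F_i$ and suppose for contradiction that $K \cap E_{F_i} = \emptyset$. Pick any two vertices $v, v' \in F_i$. The unique $v$-to-$v'$ path in the tree lies entirely inside $E_{F_i}$ by definition, so none of its edges is in $K$; hence $v$ and $v'$ remain connected in $(V, E \setminus K)$. Since this holds for every pair in $F_i$, all of $F_i$ lies in a single component of $(V, E \setminus K)$, contradicting feasibility. Therefore $K$ must meet $E_{F_i}$, which is exactly constraint $i$ in \eqref{e:hitconstr} after setting $x_j = 1$ for $e_j \in K$.

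Next the "if" direction. Suppose $K$ meets every $E_{F_i}$. Fix a forbidden set $F_i$ and an edge $e_j \in K \cap E_{F_i}$. Removing $e_j$ from the tree splits $V$ into two parts $V', V''$; since $e_j \in E_{F_i}$, it lies on the $v$-to-$v'$ path for some $v, v' \in F_i$, so $v$ and $v'$ end up on opposite sides, meaning $F_i$ is not contained in either $V'$ or $V''$. Because removing further edges of $K$ only refines the partition, $F_i$ is still split across components of $(V, E \setminus K)$. Hence no component contains $F_i$, and since $i$ was arbitrary, $K$ is feasible. This matches the earlier coloring/subgraph feasibility notion used throughout, and by Lemma \ref{l:equiv} it suffices to optimize over the subgraph formulation.

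Finally, I would tie the objectives together: the subgraph formulation maximizes $\sum_{e \in E_1} w(e)$ over feasible $E_1 = E \setminus K$, which is equivalent to minimizing $\sum_{e \in K} w(e) = \sum_j w(e_j) x_j$ over feasible $K$, i.e. over $\{0,1\}$-vectors satisfying \eqref{e:hitconstr}; this is precisely the stated set cover problem, so an optimum $x$ yields the optimum deleted-edge set $\{e_j : x_j = 1\}$. The main obstacle is purely the feasibility characterization above — in particular making the "if" direction airtight, since one must verify that meeting each $E_{F_i}$ in just one edge already suffices and that deleting additional edges never re-merges a split forbidden set; both follow from the fact that edge deletions in a forest only ever refine the vertex partition, which is worth stating explicitly.
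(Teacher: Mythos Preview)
Your proposal is correct and follows essentially the same approach as the paper: both hinge on the observation that an edge in $E_{F_i}$ lies on the unique path between some pair $v,v' \in F_i$, so deleting it separates $F_i$, and conversely if no edge of $E_{F_i}$ is deleted then all of $F_i$ remains connected. Your write-up is actually more complete than the paper's, which argues the ``if'' direction explicitly but leaves the ``only if'' direction and the objective-function equivalence implicit; your version spells out both directions and the min/max translation.
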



\begin{proof}
Take any forbidden set $F$. Under the constraints, 
there is some edge $e \in E_F$ for which $x_e = 1$, and that edge is on
the unique path between some pair of vertices $v, v' \in F$. Hence $v$ and $v'$ are not in the same
component of the forest $(V, E_1)$, where $E_1:=\{e \in E: x_e = 0\}$. Therefore $F$ cannot be contained
in a component of $(V, E_1)$. Thus the constraints of the integer linear program precisely
identify the subsets $E_1 \subset E$ such that no component of $(V, E_1)$ contains any forbidden set.
\end{proof}

\subsection{Practical implementation}

\subsubsection{Multicut problem on trees}
Williamson and Shmoys \cite[Exercise 7.2]{shm} gives a 2-approximation algorithm based on Primal Dual Schema
for the case where every forbidden set has cardinality 2, i.e the multicut problem on trees.

\subsubsection{Set cover approximations}\label{s:setcovappr}
The Set Cover problem (\ref{e:hitwt}), (\ref{e:hitconstr}) is treated at length in
Williamson \& Shmoys \cite[Ch. 1, 7]{shm} and in Vazirani \cite{vaz}.
Available algorithms include the following.

\begin{itemize}
\item
The \textbf{Primal-Dual} algorithm gives a $c$-factor approximation, where $c$ is the maximum of 
the column sums of the matrix $A$, or in other words the maximum number of forbidden sets which
can be disconnected by the removal of a single edge.
\item
In the \textbf{Greedy Set Cover} algorithm, $J_n \subset \{1,2, \ldots, m\}$ will denote the
indices of the edges deleted after $n$ steps, initialized at $J_0 = \emptyset$.
Stop when every forbidden set has been disconnected. After $n-1$ steps, the next iteration goes as follows:
\begin{enumerate}
\item
For each $j \notin J_{n-1}$, score edge $e_j$ by $w_j$ divided by the number of
connected forbidden sets which will be disconnected if $e_j$ is removed.

\item
Select $e_j$ with minimum score, using random tie-breaking, and set $J_n:=J_{n-1} \cup \{j\}$.
Reduce the collection of connected forbidden sets accordingly.

\end{enumerate}

Taking $c$ as the maximum of the column sums of the matrix $A$ as above,
\cite[Section 1.6]{shm} proves that $\max J_n$ gives an $H_c$-approximation, where
\begin{equation} \label{e:harmonic}
H_c:= \sum_{s=1}^c \frac{1}{s} \approx \log{c}.
\end{equation}
This is a better bound than that of the Primal-Dual algorithm.

\end{itemize}

\subsubsection{Exact solution through linear programming relaxation}
Call a matrix A \textbf{totally unimodular} if each subdeterminant of A is
0, 1, or -1. According to a theorem of Ghouila and Houri proved in \cite[Ch. 5]{kor},
a $b \times m$ integer matrix $A:=(a_{i,j})$ is totally unimodular if and only if, for every
$R \subset \{1, 2, ,\ldots, b\}$, there is a partition $R = R_1 \cup R_2$ such  that
\begin{equation} \label{e:tunim}
\sum_{i \in R_1} a_{i,j} - \sum_{i \in R_2} a_{i,j} \in \{-1, 0, 1\}, \quad j = 1, 2, \ldots, m.
\end{equation}

Consider the linear programming relaxation of the integer linear program of Proposition \ref{p:hitset}.
Instead of $x_j \in \{0, 1\}$, take $x_j \geq 0$ and as before
\[
A \mathbf{x} \geq \mathbf{1}
\]
where $\mathbf{1}$ is the vector whose $b$ components are all 1.

\begin{corollary}
Suppose the constraint matrix $A$ satisfies (\ref{e:tunim}), and is therefore totally unimodular.
Then the linear programming relaxation of (\ref{e:hitwt}), (\ref{e:hitconstr}) has an optimum integer solution,
which is optimum for the combinatorial data fusion problem on the tree.
\end{corollary}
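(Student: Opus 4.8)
The plan is to reduce the statement to the classical Hoffman--Kruskal integrality theorem, which lies squarely within the body of results the paper already invokes (Korte \& Vygen \cite[Ch.~5]{kor} is cited for the neighbouring Ghouila--Houri characterization (\ref{e:tunim})). The argument has three beats: (i) the linear programming relaxation attains its optimum at a vertex of its feasible polyhedron; (ii) that polyhedron is integral, because its defining matrix is totally unimodular and its right-hand side is integral, so the optimal vertex is an integer point; (iii) an integer optimal solution of the relaxation is automatically $\{0,1\}$-valued, hence is an optimal solution of the set cover program (\ref{e:hitwt})--(\ref{e:hitconstr}), and Proposition~\ref{p:hitset} then turns it into an optimal set of deleted edges. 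The only genuine ingredient is the integrality theorem; the one subtlety to watch, handled in beat (iii), is that the relaxation as stated carries no explicit upper bound $x_j \le 1$, so positivity of the weights must be used to recover $\{0,1\}$-valuedness.

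For (i): the feasible region is $P := \{\mathbf x \in \R^m : A\mathbf x \ge \mathbf 1,\ \mathbf x \ge \mathbf 0\}$. It is nonempty because $(V,E)$ is a connected tree and each forbidden set $F_i$ has at least two members, so $E_{F_i} \neq \emptyset$, every row of $A$ has row-sum at least $1$, and the all-ones vector lies in $P$. The objective $\sum_{j} w(e_j) x_j$ is bounded below by $0$ on $P$, since the weights are positive and $\mathbf x \ge \mathbf 0$. As $P$ lies in the nonnegative orthant it contains no line, hence is pointed; a linear function that is bounded below on a nonempty pointed polyhedron attains its minimum at a vertex. So the relaxation has an optimal vertex solution $\mathbf x^\star$.

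For (ii): write $P = \{\mathbf x : M\mathbf x \ge \mathbf c\}$ with $M = \begin{pmatrix} A \\ I_m \end{pmatrix}$ and $\mathbf c = \begin{pmatrix} \mathbf 1 \\ \mathbf 0 \end{pmatrix}$, where $I_m$ is the $m \times m$ identity. Appending an identity block to a totally unimodular matrix preserves total unimodularity: a square submatrix of $M$ either uses only rows of $A$, so its determinant is a subdeterminant of $A$, or uses a row coming from $I_m$, which on the chosen columns is either all zero or has a single $1$; expanding the determinant along that row leaves, up to sign, the determinant of a smaller submatrix of $M$, and induction on the size, with $0,1$ entries as base case, finishes. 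Thus $M$ is totally unimodular and $\mathbf c$ is integral, so by the Hoffman--Kruskal theorem $P$ is an integral polyhedron; in particular $\mathbf x^\star \in \Z_{\ge 0}^m$.

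For (iii): no coordinate of $\mathbf x^\star$ exceeds $1$. Indeed, if $x_j^\star \ge 2$, then setting $x_j$ to $1$ while keeping all other coordinates fixed decreases the left-hand side of each constraint $i$ by $a_{i,j}(x_j^\star - 1) \le x_j^\star - 1$; since the term $a_{i,j}x_j^\star \ge 0$ was already part of a left-hand side that is at least $1$, and in fact at least $x_j^\star$ whenever $a_{i,j}=1$, the modified point still lies in $P$, yet its objective value is strictly smaller because $w(e_j) > 0$, contradicting optimality. Hence $\mathbf x^\star \in \{0,1\}^m$ is feasible for the integer program of Proposition~\ref{p:hitset}, and as the integer optimum is bounded below by the LP optimum, which $\mathbf x^\star$ attains, $\mathbf x^\star$ is optimal for that integer program. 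By Proposition~\ref{p:hitset}, $\{e_j \in E : x_j^\star = 1\}$ is an optimum set of deleted edges for the combinatorial data fusion problem on the tree, completing the proof.
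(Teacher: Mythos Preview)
Your proof is correct and follows essentially the same route as the paper, which simply refers the reader to \cite[Ch.~5]{kor} for the total dual integrality theory of totally unimodular systems. Your version is in fact more complete: you spell out the Hoffman--Kruskal argument and, in beat~(iii), explicitly verify that an integer optimum must be $\{0,1\}$-valued using positivity of the weights --- a detail the paper leaves implicit.
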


\begin{proof}
See \cite[Ch. 5]{kor} for material on total dual integrality when $A$ is totally unimodular.
\end{proof}


\section{Data fusion based on Gomory-Hu trees}

Let us consider solution methods when $G$ is not a tree. 

\subsection{Definition of Gomory-Hu tree } \label{s:gomhu}

Recall the definition (Schrijver \cite[Ch. 15]{sch})
of a Gomory-Hu tree $(V, E_T, \omega_T)$ for the weighted graph
$G:=(V, E_0, w)$. This is a tree on the same vertex set as $G$, 
with edge weight function $\omega_T$, such that for any
two vertices $v, v' \in V$, the cost $c_{v, v'}$ of a minimum cut
 (i.e. the total weight of deleted edges)
between any distinct $v$ and $v'$ in $G$ equals the minimum of $\omega_T(e)$ over edges $e$
along the unique path from $v$ to $v'$ in $(V, E_T)$. Note that
\begin{enumerate}
\item
The tree edges $E_T$ need not be a subset of graph edges $E_0$.
\item
The construction is not canonical. For example, given four vertices $v_1, v_2, v_3, v_4$ in $V$,
there may be one Gomory-Hu tree in which removing the lowest weight edge on the path between $v_1$
and $v_3$ also disconnects $v_2$ from $v_4$, and another Gomory-Hu Tree for the same $G$ in which
it does not.

\end{enumerate}

See Vazirani \cite[Ch. 4.2]{vaz} for a description of how to construct the Gomory-Hu Tree,
and an explanation of its uses in multicut problems.
We tested the \texttt{JGraphT} implementation of \texttt{GusfieldGomoryHuCutTree} on random graphs with
comparable vertex degree distributions, from 90 to 98000 edges. The run time grew at a sub-quadratic rate
in the number of edges.

\subsection{Gomory-Hu tree for example of Figure \ref{f:greedynonopt}}
When a single edge is removed from a Gomory-Hu tree,
the weight of that edge corresponds to the cost (\ref{e:hgcof}) of the 2-coloring 
which assigns color 0 to one tree component, and color 1 to the other tree component.

However when two edges $e, e'$ are removed from a Gomory-Hu tree, breaking it into three components,
the cost (\ref{e:hgcof}) of the corresponding 3-coloring could be less than $\omega_T(e) + \omega_T(e')$.
This is because some edges
of $G$ could be counted both towards $\omega_T(e)$ and towards $\omega_T(e')$.
This is illustrated in Figure \ref{f:gomoryhu}, where we remove two edges from a
Gomory-Hu Tree for the graph shown in  Figure \ref{f:greedynonopt}.
Here the optimum coloring of the forbidden hypergraph is a
2-coloring (Figure \ref{f:greedynonopt}), whereas a 3-coloring of the
Gomory-Hu tree was needed to disconnect both forbidden sets. The caption explains how two of these colors
can be merged, giving the optimum solution.

\begin{figure}
\begin{center}
\begin{tikzpicture}[scale=1.75]
\node (v1) at (1,0){};
\node (v2) at (-1,0){};
\node (v3) at (0,-1){};
\node (v4) at (0,0){};
\node (v5) at (-1,-1){};
\node (v6) at (-2,0){};
\node (v7) at (0,2){};
\node (v8) at (0,1){};
\node (v9) at (0,-2){};
\node (v10) at (1,-1){};
\node (v11) at (2,0){};

\fill (v1) circle (0.01) node [above] {$v_1$};
\fill (v2) circle (0.01) node [below] {$v_2$};
\fill (v3) circle (0.01) node [right] {$v_3$};
\fill (v4) circle (0.01) node [below right] {$v_4$};
\fill (v5) circle (0.01) node [below left] {$v_5$};
\fill (v6) circle (0.01) node [left] {$v_6$};
\fill (v7) circle (0.01) node [above] {$v_7$};
\fill (v8) circle (0.01) node [right] {$v_8$};
\fill (v9) circle (0.01) node [below] {$v_9$};
\fill (v10) circle (0.01) node [below] {$v_{10}$};
\fill (v11) circle (0.01) node [right] {$v_{11}$};

\draw [thick] (v6) -- (v2) -- (v4)--(v1) -- (v11);
\draw [thick] (v7) -- (v8) -- (v4) -- (v3) -- (v9);
\draw [thick] (v1) -- (v10);
\draw [thick] (v4) --(v5);

\node at (-1.5,-0.1) {\tiny{$1.56$}};
\node at (-0.5,-0.1) {\tiny{$2.63$}};
\node at (0.5,-0.1) {\tiny{$2.47$}};
\node at (1.5,-0.1) {\tiny{$0.37$}};
\node at (0.2,-0.5) {\tiny{$3.06$}};
\node at (0.2,-1.5) {\tiny{$1.33$}};
\node at (0.2,0.5) {\tiny{$1.29$}};
\node at (0.2,1.5) {\tiny{$1.31$}};
\node at (-0.5,-0.7) {\tiny{$2.05$}};
\node at (1.2,-0.5) {\tiny{$0.73$}};
\end{tikzpicture}
\end{center}
\caption{
\textbf{Gomory-Hu tree} \textit{computed from the graph of Figure \ref{f:greedynonopt}, whose
 forbidden sets are $\{v_1, v_2, v_3\}$ and $\{v_5,  v_8\}$. 
By inspection, the least costly way to disconnect both forbidden sets in this Gomory-Hu tree 
is to remove two tree edges, $\{v_4,v_8\}$ and $\{v_1,v_4\}$. 
The total weight of graph edges to be removed seems to be $1.29+2.47= 3.76$, but
 the graph edge $\{v_1, v_7\}$ is removed twice, so the true cost is $3.76 - 0.56 = 3.2$.
This gives a 3-coloring of the forbidden hypergraph, of which $\{v_1, v_{10}, v_{11}\}$ is
one color class, and $\{v_7, v_8\}$ is another. These two color classes can be merged, because their
union contains no forbidden set. Thus the graph edge $\{v_1, v_7\}$ is restored, and
the total weight of removed edges drops to $3.20 - 0.56 = 2.64$, which coincides with
the optimal solution for the original graph of Figure \ref{f:greedynonopt}.
So the Gomory-Hu tree algorithm plus a merger yields the optimum solution in this case.
}
}\label{f:gomoryhu}
\end{figure}
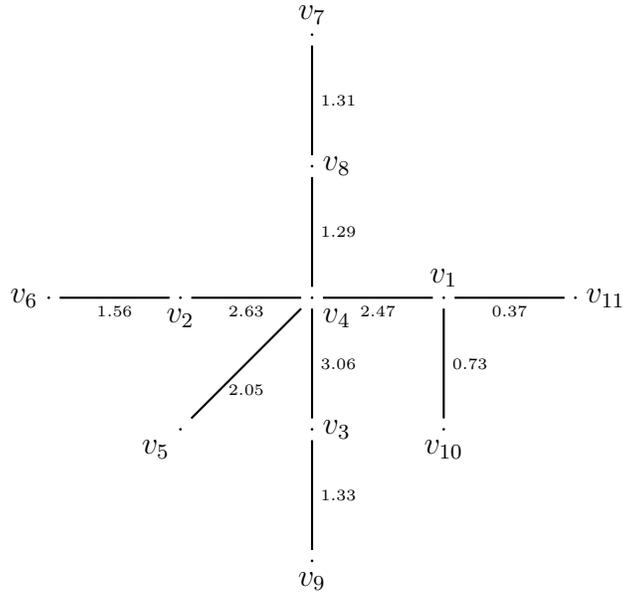

\subsection{Single forbidden set: why Gomory-Hu tree succeeds}

The question arises as to whether a Gomory-Hu tree $(V, E_T, \omega_T)$ may offer a better way to
solve the combinatorial data fusion problem for
the weighted graph $(V, E_0, w)$, than Proposition \ref{p:onef},
when there is just one forbidden set $F \subset V$.

When $|F|= 2$, it provides an optimum solution,
by definition. 
It would appear that when $|F| \geq 3$, the structure of the Gomory-Hu Tree 
disallows some partitions $K, F \setminus K$ from being considered. Suppose $F:=\{v_0, v_1, v_2\}$,
and the optimal partition of $V$ places $v_1$ in one component and
$\{v_0, v_2\}$ in the other. In the specific Gomory-Hu tree which is selected,
the unique path from $v_0$ to $v_2$ may pass through $v_1$. Hence the only partitions
whose cost is visible in this Gomory-Hu tree are (a) $v_0$ in one component,
$\{v_1, v_2\}$ in the other, and (b) $\{v_0, v_1\}$ in one component,
$v_2$ in the other. Despite this objection, Proposition \ref{p:gomhuone}
shows that optimum combinatorial data fusion can be derived from any Gomory-Hu tree.

\begin{definition}\label{d:pathdiscrim}
Given a tree $T:=(V, E)$ and a subset $F \subset V$ with $|F| \geq 3$,
apply the following iterative algorithm: 
whenever a leaf vertex is not an element of $F$, delete it and the incident edge.
The terminal state is a tree $T':=(V', E')$, which we call the \textbf{$F$-bounded subtree},
all of whose leaf vertices are elements of $F$. If all elements of $F$ are leaves
of the $F$-bounded subtree, we say that the tree $T$ \textbf{path-discriminates} the 
vertex set $F$.
\end{definition}

The important feature of a tree $T$ which path-discriminates $F$ is that
there do not exist three vertices $v, v', v'' \in F$ such that the unique path in $T$ from
$v$ to $v'$ passes through $v''$.

\begin{proposition} \label{p:gomhuone}
Consider a combinatorial data fusion problem on $G:=(V, E_0, w)$ in which there is a  
single forbidden set $F$ with $|F| \geq 3$.
Suppose $T:=(V, E_T, \omega_T)$ is any Gomory-Hu tree for $G$, which may or may not
 path-discriminate the vertex set $F$, in the sense of Definition \ref{d:pathdiscrim}.
A 2-coloring $\chi \to \{0, 1\}$ obtained as follows is an optimum solution:
\begin{enumerate}
\item
Construct the $F$-bounded subtree $T':=(V', E_T')$ of $T$, whose leaf vertices
are a subset of $F$ by Definition \ref{d:pathdiscrim}.
\item
Identify an edge $e_o \in E_T' \subset E_T$ of minimum weight. By construction, removal of this edge
from $E_T$ breaks the Gomory-Hu tree into two vertex components 
$V_0$ and $V_1$, neither of which contains $F$.
\item
Return the 2-coloring $\chi$ with value 0 on $V_0$, and value 1 on $V_1$. 
\end{enumerate}
Then $\omega_T(e_o)$ is the total weight of edges of $G$ with differently colored
end points under $\chi$. 

\end{proposition}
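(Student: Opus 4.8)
The plan is to show two things: first, that the 2-coloring $\chi$ produced by the algorithm is feasible, meaning neither $V_0$ nor $V_1$ contains the forbidden set $F$; and second, that its cost, namely the total weight of $G$-edges with differently colored endpoints, equals $\omega_T(e_o)$, which is a lower bound on the cost of \emph{any} feasible 2-coloring. Since Lemma~\ref{l:oneforb} tells us an optimum has exactly two components, optimality over all feasible solutions follows once we match this lower bound.

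First I would check feasibility. The key combinatorial observation, already highlighted after Definition~\ref{d:pathdiscrim}, is that in the $F$-bounded subtree $T'$ every edge $e \in E_T'$ lies on the unique $T$-path between some pair of vertices $u, u' \in F$: indeed $T'$ is obtained by pruning non-$F$ leaves, so every remaining edge separates at least one element of $F$ from at least one other. Hence removing any $e_o \in E_T'$ from $E_T$ splits $V$ into $V_0 \ni u$ and $V_1 \ni u'$ with $u, u'$ in distinct parts, so $F \not\subseteq V_0$ and $F \not\subseteq V_1$; $\chi$ is feasible.

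Next, the cost identity. By the defining property of the Gomory-Hu tree, for the pair $u, u'$ separated by $e_o$, the cost $c_{u,u'}$ of a minimum $u$--$u'$ cut in $G$ equals the minimum edge weight of $\omega_T$ along the $T$-path from $u$ to $u'$. Because $e_o$ was chosen to have minimum weight \emph{among all edges of $E_T'$}, and because $T'$ contains every $T$-path between elements of $F$, the edge $e_o$ is in particular a minimum-weight edge on the $u$--$u'$ path that realizes it — so $\omega_T(e_o) = c_{u,u'}$, and the partition $(V_0, V_1)$ induced by deleting $e_o$ from $E_T$ is a minimum $u$--$u'$ cut of $G$ of exactly that weight. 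Thus the total weight of $G$-edges crossing between $V_0$ and $V_1$ is $\omega_T(e_o)$.

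Finally, optimality. Any feasible 2-coloring $\chi'$ with parts $V_0', V_1'$ must place some pair $x, x' \in F$ on opposite sides (else $F$ is monochromatic), so the cut $(V_0', V_1')$ separates $x$ from $x'$ and hence has weight at least $c_{x,x'} \ge \min_{u,u' \in F} c_{u,u'}$. It remains to argue $\omega_T(e_o) = \min_{u,u'\in F} c_{u,u'}$: for any $u, u' \in F$, the $T$-path from $u$ to $u'$ lies in $T'$, so its minimum-weight edge has weight $\ge \omega_T(e_o)$ by the choice of $e_o$, giving $c_{u,u'} \ge \omega_T(e_o)$; combined with the previous paragraph ($\omega_T(e_o) = c_{u,u'}$ for the specific pair separated by $e_o$) this yields equality. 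Hence $\chi$ attains the minimum possible 2-coloring cost, and by Lemma~\ref{l:oneforb} it is an optimum solution.

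The main obstacle is the bookkeeping around the claim that $e_o$ is simultaneously (i) a minimum-weight edge on \emph{some} $F$-pair path and (ii) no heavier than the minimum-weight edge on \emph{every} $F$-pair path; this requires knowing that $T'$ is exactly the union of the $T$-paths joining pairs of $F$, which follows from the leaf-pruning construction but should be stated carefully. Once that structural fact is pinned down, the Gomory-Hu min-cut property does the rest mechanically.
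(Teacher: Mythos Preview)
Your proof is correct and follows essentially the same approach as the paper: feasibility via the observation that every edge of the $F$-bounded subtree separates some pair in $F$, then optimality by noting any feasible 2-coloring separates some $F$-pair and hence has cost at least $\min_{u,u'\in F} c_{u,u'} = \omega_T(e_o)$, combined with Lemma~\ref{l:oneforb}. You are in fact more explicit than the paper about the cost identity $C_G(\chi)=\omega_T(e_o)$ and about the equality $\omega_T(e_o)=\min_{u,u'\in F} c_{u,u'}$; the one place to tighten is the sentence asserting that $(V_0,V_1)$ has $G$-cut weight exactly $\omega_T(e_o)$, which uses the standard fundamental-cut property of Gomory--Hu trees (each tree edge induces a min cut in $G$ of the stated weight) rather than following from the min-cut-value property alone---but the paper relies on the same fact without comment.
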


\begin{proof}
Removal of any $e_o \in E_T' \subset E_T$ partitions $V'$ into two components, both
of which contain at least one leaf vertex of $T'$, and hence at least one element of $F$. 
Neither component contains $F$, so the proposed 2-coloring $\chi$ is not constant on $F$,
and is indeed a proper coloring. 

By Lemma \ref{l:oneforb}, an optimum solution
of this combinatorial data fusion problem is a 2-coloring of $V$. Hence it suffices to prove that
there is no proper 2-coloring $\chi'$ of $(V, \{F\})$ for which the total weight of edges of $G$
 with differently colored end points is less than $\omega_T(e_o)$.

Suppose $\chi'$ is a proper coloring of $(V,\{F\})$ that partitions $V$ into $V_0'$ and $V_1'$.
 Let $h(u,v)$ denote the weight of a minimum cut between vertices $u, v \in V$ in the graph $G$,
  and consider the value:
\[
h_*: = \min \{h(u, v), \quad u \in F \cap V_0', \quad v \in F \cap V_1'\}.
\]
There are distinct vertices $u_0 \in F \cap V_0'$ and $u_1 \in F \cap V_1'$ for which
$h(u_0, u_1)$ attains this minimum value $h_*$. The vertices $u_0$ and $u_1$ are in the $F$-bounded
 subtree\footnote{
$u_0$ and $u_1$ do not need to be leaf vertices in $T'$; the path-discrimination
assumption is unnecessary.
} $T':=(V', E_T')$. Therefore, by the properties of a Gomery-Hu tree, $h(u_0,u_1)$ is the minimum weight of the 
edges on the $(u_0,u_1)$-path in $T'$, denote this edge of $T'$ by $e'$.  
It follows that
\[
h_* = h(u_0,u_1) = \omega_T(e') \geq \omega_T(e_o),
\]
where edge $e_o \in E_T' \subset E_T$ has minimum weight. However, $h_*$ is also a lower bound on 
the total weight of edges of $G$ with differently colored end points under the coloring $\chi'$.
Hence $\chi'$ gives a deleted edge weight at least as large as that of $\chi$, as claimed.
\end{proof}

\subsection{Limitations on power of Gomory-Hu tree in general}

Suppose $\chi:V \to \{1,2,\ldots, t\}$ is an optimum forbidden hypergraph coloring for a combinatorial
 data fusion problem $(V, E, w, \mathcal{F})$ where $|\mathcal{F}|=b \geq \binom{t}{2}$;
existence of an optimum $\chi$ with such a range follows from
Theorem \ref{t:numcolorsmax}. Take a Gomory-Hu tree 
$T:=(V, E_T, \omega_T)$ for $G:=(V, E, w)$. We would like to compare the cost of discarded edges
in $T$ with those discarded in $G$, as in (\ref{e:hgcof}), i.e. to compare:
\[
C_G(\chi):=\sum_{\substack{
 \{u, v\} \in E \\
 \chi(u) \neq \chi(v)}
} w(\{u, v\}); \quad
C_T(\chi):=\sum_{\substack{
 \{u, v\} \in E_T \\
 \chi(u) \neq \chi(v)}
} \omega_T(\{u, v\}).
\]
Proposition \ref{p:gomhuone} ensures that if $b = 1$ then for each Gomery-Hu tree $T$ there exists
 an optimal $2$ coloring such that $C_G(\chi) = C_T(\chi)$. However, as the next proposition 
 shows, the ratio between $C_G(\chi)$ and $C_T(\chi)$ is not bounded even for 
the multicut problem ($b \geq1$, but all forbidden sets of size $2$).

\begin{proposition} \label{p:ugcght}
Assuming the Unique Games Conjecture (see \cite[Ch. 13 ]{shm}), there is no
constant $\alpha \geq 1$ such that, for every optimum coloring $\chi$ for
a multicut problem on a graph $G$, there is a Gomory-Hu tree
$T$ for $G$ such that
\begin{equation} \label{e:treecompare}
C_T(\chi) \leq \alpha C_G(\chi)
\end{equation}
unless \textbf{P = NP}.
\end{proposition}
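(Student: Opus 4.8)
The plan is to argue by contradiction. Suppose such a constant $\alpha\geq 1$ existed; I would use it to build a polynomial-time $O(\alpha)$-approximation algorithm for the multicut problem on general graphs. Since, assuming the Unique Games Conjecture, it is NP-hard to approximate multicut within any constant factor (see \cite[Ch.~13]{shm}), this would force \textbf{P = NP}, which is exactly the proposition's conclusion. The algorithm has three ingredients: first, reduce to an instance whose Gomory--Hu tree is unique; second, solve multicut approximately on that tree; third, pull the tree solution back to $G$ via a monotonicity inequality.

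For the reduction to uniqueness: given a multicut instance on $G=(V,E_0,w)$ with its source--sink pairs, perturb the weights by pairwise-distinct infinitesimals (for instance add $\delta\cdot 2^{-i}$ to the weight of the $i$-th edge, with $\delta$ tiny), so that distinct edge-subsets receive distinct total weights. Then every pair of vertices has a unique minimum cut, so $G$ has a unique Gomory--Hu tree $T$, computable in polynomial time by Gusfield's algorithm. The perturbation changes the optimum $\mathrm{OPT}(G)$ of the multicut by an arbitrarily small relative factor and leaves the pairs untouched, so an $O(\alpha)$-approximation for the perturbed instance yields an $O(\alpha)$-approximation for the original; crucially, ``there is a Gomory--Hu tree'' in the hypothesis now refers to the single tree $T$ we actually hold.

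The monotonicity inequality I would establish is: for any coloring $\chi$ of $V$ and the Gomory--Hu tree $T$, $C_G(\chi)\le C_T(\chi)$. Removing a tree edge $e$ from $T$ partitions $V$ into two sides which, by the defining property of a Gomory--Hu tree, form a minimum cut of $G$ of total weight exactly $\omega_T(e)$; if $\{u,v\}\in E_0$ has $\chi(u)\neq\chi(v)$, then the $T$-path from $u$ to $v$ contains an edge $e$ whose endpoints get different colors, and $u,v$ lie on opposite sides of the associated minimum cut, so $\{u,v\}$ is ``charged'' to $e$. Summing gives $C_G(\chi)\le\sum_{e\text{ cut by }\chi}\omega_T(e)=C_T(\chi)$; this is the double-counting effect already illustrated in Figure~\ref{f:gomoryhu}, and note that the reverse inequality is precisely what the proposition denies, so only this direction is available. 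Now assemble the pieces: let $\chi^*$ be an optimum coloring of the perturbed multicut instance, so $C_G(\chi^*)=\mathrm{OPT}(G)$; by the assumed hypothesis, $C_T(\chi^*)\le\alpha\,\mathrm{OPT}(G)$, hence the multicut optimum on the tree $T$ is at most $\alpha\,\mathrm{OPT}(G)$. Running the $2$-approximation for multicut on trees (\cite[Exercise~7.2]{shm}, as discussed in this paper) on $T$ produces a feasible coloring $\chi$ with $C_T(\chi)\le 2\alpha\,\mathrm{OPT}(G)$, and by monotonicity $C_G(\chi)\le C_T(\chi)\le 2\alpha\,\mathrm{OPT}(G)$. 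Thus $\chi$ is a polynomial-time $2\alpha$-approximate multicut of $G$ --- the desired contradiction.

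I expect the main obstacle to be the step that turns the purely existential ``there is a Gomory--Hu tree $T$'' into an object the algorithm can actually construct: distinct Gomory--Hu trees of the same graph can carry very different multicut costs (already for a $4$-cycle, a star Gomory--Hu tree and a path Gomory--Hu tree give different multicut optima), so computing an arbitrary one would not suffice, and the uniqueness-by-perturbation device is exactly what forces the hypothesis onto the tree in hand. A secondary point to check carefully is the uniqueness claim itself --- distinct \emph{values} of the minimum cuts of different vertex pairs could in principle still permit several Gomory--Hu trees, so I would either argue directly that generic weights force a unique Gomory--Hu tree or fall back on a randomized isolation argument. By contrast, the monotonicity inequality and the tree $2$-approximation are essentially off the shelf, and the appeal to UGC-hardness of multicut is standard.
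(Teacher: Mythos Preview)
Your proof is correct and follows essentially the same approach as the paper: assume the bound, run the tree 2-approximation for multicut on a Gomory--Hu tree $T$, chain $C_T(\psi')\le 2C_T(\psi)\le 2C_T(\chi)\le 2\alpha C_G(\chi)$, and invoke UGC-hardness of constant-factor multicut approximation. You are in fact more careful than the paper on two points it leaves implicit --- the monotonicity $C_G(\cdot)\le C_T(\cdot)$ needed to pull the tree solution back to $G$, and the constructibility of the ``right'' Gomory--Hu tree, which you handle by perturbation --- so the uniqueness worry you flag is a refinement of the paper's argument rather than a gap in yours.
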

 
\begin{proof}
Restrict to the multicut problem, meaning that $|F|=2$ for all $F \in \mathcal{F}$.
Let $\chi$ be an optimal coloring of $(V,\mathcal{F})$ with respect to $G$. 
Select a Gomory-Hu tree $T:=(V, E_T, \omega_T)$ for $G$, and let
$\psi:V \to \{1,2,\ldots, b\}$ be an optimum coloring of $(V, \mathcal{F})$
with respect to $T$ (i.e. a solution to the combinatorial data fusion problem with $G$ replaced by $T$). 
Williamson and Shmoys \cite[Exercise 7.2]{shm} gives a 2-approximation algorithm 
for the multicut problem on trees. That means that we may construct
$\psi':V \to \{1,2,\ldots, b\}$ so that
\[
C_T(\psi') \leq 2 C_T(\psi) \leq 2 C_T(\chi) .
\]
The second inequality is valid because $\chi$ is also a coloring (possibly not an optimum)
with respect to $T$. If the inequality (\ref{e:treecompare}) were valid for some $\alpha \geq 1$,
then
\[
C_T(\psi') \leq 2 \alpha C_G(\chi),
\]
In other words, the coloring $\psi'$ gives a $2 \alpha$-approximation to the multicut problem.
According to \cite[Theorem 8.10]{shm}, assuming the Unique Games Conjecture, such an 
approximation cannot exist unless \textbf{P = NP}.
\end{proof}

\section{Greedy data fusion algorithm based on Gomory-Hu tree} 
Despite the impossibility of constant-factor approximation, demonstrated in Proposition \ref{p:ugcght}, 
we shall attempt to use the Gomory-Hu tree to approximate combinatorial data fusion.
Experiments reported in Section \ref{s:ghuxpts} show surprisingly good performance.

The following algorithm, which we have implemented in Java on top of the \texttt{JGraphT} library,
 combines the idea of Proposition \ref{p:gomhuone} with the Greedy
Set Cover algorithm of Section \ref{s:setcovappr}.

\subsection{Algorithm}\label{s:greedygomhu}
A combinatorial data fusion problem $(V, E, w, \mathcal{F})$ is given.
\begin{enumerate}

\item
Compute a Gomory-Hu tree $T:=(V, E_T, \omega_T)$ for
$G:=(V, E, w)$. 

\item
Compute, for each forbidden set $f \in \mathcal{F}$, the edge set $E_f \subset E_T$ of 
the $f$-bounded subtree of $T$ (see Definition \ref{d:pathdiscrim}), for example by the method described in Section \ref{s:ilpf}.
Initialize Boolean arrays $(y_f, f \in \mathcal{F})$ at 1, and $(x_e, e \in E_T)$ at zero.
Subsequently $y_f=1$ will mean that forbidden set $f$ is connected, and $x_e = 1$
will mean that edge $e$ has been deleted from $T$.

\item
\textbf{Iterative Splitting: }
Continue until $y_f = 0$ for all $f \in \mathcal{F}$, i.e. all forbidden sets have been disconnected.

\begin{enumerate}
\item 
For each undeleted tree edge, i.e. $e \in E_T$ with $x_e = 0$, compute
the number of forbidden sets which will be newly disconnected if $e$ is removed:
\begin{equation} \label{e:phi_e} 
\phi(e):=\sum_{f \in \mathcal{F}} y_f 1_{  \{e \in E_f  \} }.
\end{equation}
\item
Select (with random tie-breaking) an undeleted $e \in E_T$ for which
the ratio $\omega_T(e)/\phi(e)$ is minimum, as in Greedy Set Cover.

\item
Delete this $e$ by setting $x_e = 1$, and disconnect those $f$ for which $e \in E_f$
by setting $y_f = 0$.

\end{enumerate}

\item \textbf{Color Mergers: }
If $d$ is the number of steps needed to disconnect
every forbidden set, then $d$ edges of the Gomory-Hu tree
have been removed, leaving a forest with $d+1$ components.
This corresponds to a $(d+1)$-coloring $\chi: V \to \{0, 1, \ldots, d\}$ of $(V,\mathcal{F})$.

\begin{enumerate}
\item
Construct an edge-weighted graph $M=(V(M), E(M))$ whose nodes are the color classes:
\[
V(M):= \{C_0, C_1, \ldots, C_d\}; \quad C_j:=\chi^{-1}(j) \subset V.
\]
The weight on an edge $\{C_i, C_j\}$ is defined as the weight of graph edges in 
$G:=(V, E, w)$ which would be restored if these two color classes were merged,
and where merging would not monocolor a forbidden set:
\[
\theta(\{C_i, C_j\}):=\sum_{u \in C_i, v \in C_j} w(\{u,v\}).
\]
Take the edge set $E(M)$ to be those pairs of color classes with non-zero weight.

\item
We wish to merge color classes without their merger including a forbidden set.
This is a combinatorial data fusion sub-problem $(V(M), E(M), \theta, \mathcal{C})$
where a collection of colors forms a forbidden set in $\mathcal{C}$ if their merger
would cause some forbidden set in $\mathcal{F}$ to become monochromatic.

\item
For a small number of colors, this sub-problem can be solved
exhaustively, by considering all possible color mergers. If $E(M)$ is too large for exhaustion,
perform color mergers in a greedy  way, with those of highest $\theta$ weight first.

\item
If $m+1$ mergers are possible, the final number of tree components is $d-m$.
Apply a different color to each component, giving a $d-m$-coloring of the 
original forbidden hypergraph.
\end{enumerate}

\end{enumerate}

We are not yet able to present a satisfactory analysis of the approximation
properties of the algorithm of Section \ref{s:greedygomhu}. 
See Table \ref{comparetab} for experimental results.
Here is a weak bound;

\begin{proposition} \label{p:2approx}
In the case of a combinatorial data fusion problem with exactly $b$ forbidden sets, 
the greedy algorithm of Section \ref{s:greedygomhu}
provides a coloring whose cost is no more than $b$ times the cost of an optimum coloring.
\end{proposition}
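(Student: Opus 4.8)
The plan is to bound the cost of the greedy coloring produced in Section \ref{s:greedygomhu} against a trivial per-forbidden-set upper bound, and then bound the optimum below by a single cut cost. First I would observe that the greedy algorithm, at each step of Iterative Splitting, deletes a tree edge $e$ minimizing $\omega_T(e)/\phi(e)$; in particular, since $\phi(e)\ge 1$ always and every undeleted tree edge lying on the $F$-bounded subtree of some still-connected forbidden set $F$ is a candidate, the edge chosen has $\omega_T(e)\le \omega_T(e_F)$ for \emph{some} minimum-weight edge $e_F$ of \emph{each} currently-connected $f$-bounded subtree. By Proposition \ref{p:gomhuone}, for a single forbidden set $f$ the weight of a minimum-weight edge of the $f$-bounded subtree of $T$ equals the optimum data-fusion cost for $(V,E,w,\{f\})$, which in turn is at most the cost $C_G(\chi^*)$ of the global optimum coloring $\chi^*$ (since $\chi^*$ already disconnects $f$, so its cut is a feasible, hence no-cheaper, way of separating $f$). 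Thus each individual greedy tree-edge deletion costs at most $C_{\mathrm{OPT}}:=C_G(\chi^*)$.

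Next I would count the number of deletions. Since $\phi(e)\ge 1$ at every step and every step disconnects at least one previously-connected forbidden set, the loop terminates after at most $b$ steps, so $d\le b$ tree edges are removed. The total $\omega_T$-weight of removed tree edges is therefore at most $b\cdot C_{\mathrm{OPT}}$. The remaining point is to pass from this bound on removed \emph{tree} weight to a bound on the cost $C_G(\chi)$ of the coloring the algorithm actually outputs. Here I would use the elementary fact (already exploited in the discussion around Figure \ref{f:gomoryhu}) that for any coloring $\chi$, $C_G(\chi)\le\sum_{e:\ \chi\text{ non-constant on the two sides of }e}\omega_T(e)$, because each $G$-edge with differently colored endpoints contributes to $\omega_T(e)$ for at least one removed tree edge $e$ on the tree-path between its endpoints — indeed each bichromatic $G$-edge crosses at least one of the $d$ tree cuts, and its $w$-value is counted in the $\omega_T$-weight of every tree edge along its tree path, in particular in at least one removed one. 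Summing, $C_G(\chi)\le\sum_{x_e=1}\omega_T(e)\le b\cdot C_{\mathrm{OPT}}$. Finally, the Color Mergers phase only deletes tree edges (equivalently, only merges color classes), which can never increase $C_G$, so the final coloring has cost $\le b\cdot C_{\mathrm{OPT}}$ as well.

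The main obstacle I anticipate is making the "each greedy deletion costs at most $C_{\mathrm{OPT}}$" step fully rigorous as the algorithm proceeds: after some forbidden sets are disconnected, the relevant comparison is between the next greedy edge and the minimum-weight edge of the $f$-bounded subtree of a \emph{still-connected} $f$, and one must check that such an edge is still present (not yet deleted) — this is clear since if all edges of that subtree were deleted then $f$ would be disconnected, contradicting $y_f=1$. Once that is nailed down, one also wants the clean inequality $\omega_T(e_f)\le C_{\mathrm{OPT}}$, which is exactly Proposition \ref{p:gomhuone} (optimum for the single-forbidden-set instance $\{f\}$) combined with the monotonicity observation that the global optimum $\chi^*$ is a feasible separator of $f$. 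The rest is bookkeeping: at most $b$ deletions, the tree-to-graph cost inequality, and monotonicity of $C_G$ under color merges.
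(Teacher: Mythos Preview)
Your overall architecture matches the paper's, but the key per-step inequality is wrong. You assert that because the greedy edge $e$ minimizes $\omega_T(e)/\phi(e)$, and $e_F$ is available with $\phi(e_F)\ge 1$, it follows that $\omega_T(e)\le \omega_T(e_F)$. That does not follow: the greedy rule compares \emph{ratios}, not weights, so an edge with larger weight but larger $\phi$ can win. Concretely, take two forbidden sets $F_1,F_2$ with $E_{F_1}=\{e^\ast,a\}$, $E_{F_2}=\{e^\ast,b\}$, and $\omega_T(e^\ast)=10$, $\omega_T(a)=\omega_T(b)=6$. Then $\phi(e^\ast)=2$, $\phi(a)=\phi(b)=1$, the ratios are $5,6,6$, and greedy selects $e^\ast$; yet $\omega_T(e^\ast)=10>6=\omega_T(e_{F_1})=\omega_T(e_{F_2})$. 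So your sentence ``each individual greedy tree-edge deletion costs at most $C_{\mathrm{OPT}}$'' is not established by the argument you give.

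The fix is exactly what the paper does: from the ratio comparison you only get $\omega_T(e_j)/\phi(e_j)\le \omega_T(e_F)\le C_{\mathrm{OPT}}$, i.e.\ $\omega_T(e_j)\le \phi(e_j)\,C_{\mathrm{OPT}}$. Then, rather than bounding the number of steps by $b$ and multiplying, one uses the telescoping identity $\sum_j \phi(e_j)=b$ (each forbidden set is disconnected exactly once) to obtain $\sum_j \omega_T(e_j)\le C_{\mathrm{OPT}}\sum_j \phi(e_j)=b\,C_{\mathrm{OPT}}$. Your observation that $C_G(\chi)\le \sum_{j}\omega_T(e_j)$, because every bichromatic $G$-edge crosses at least one of the removed Gomory--Hu cuts and hence is counted in some $\omega_T(e_j)$, is correct and is a point the paper leaves implicit; keep that paragraph, and replace the faulty per-step weight bound by the ratio bound above.
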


\textbf{Remark: } The approximation for multicut given in \cite[Theorem 8.9]{shm} suggests
that there may exist a $c \log{(b+1)}$-approximation algorithm,
where $c$ is a constant depending on the maximum or minimum cardinality of any forbidden set.

\begin{proof}
Let $C$ denote the cost of an optimum coloring. Let $e_F \in E_T$ be a tree edge which is of minimum cost
among those which disconnect $F$, for each forbidden set $F$. Then
\[
\omega_T(e_F) \leq C, \quad \forall F \in \mathcal{F}
\]
because in particular the optimum coloring disconnects $F$. Suppose the iterative splitting
part of the algorithm deletes tree edges $e_1, e_2, \ldots, e_m$, where necessarily
\[
\sum_{j=1}^m \phi(e_j) = b,
\]
($\phi(e)$ defined as in (\ref{e:phi_e}))
which is the number of forbidden sets to be split. For any $j \in \{1,2,\dots,m\}$, $\omega_T(e_j)$ does not exceed the sum
of those values of $\omega_T(e_F)$ for which forbidden set $F$ is split at step $j$, which is bounded 
above by $C \phi(e_j)$.
The total weight of edges discarded by the iterative splitting of the greedy algorithm does not exceed
\[
\sum_{j=1}^m \omega_T(e_j) \leq C \sum_{j=1}^m \phi(e_j) = b C.
\]
The merge phase can only lessen the weight of discarded  edges.
Thus the desired factor $b$ approximation is established.
\end{proof}

\subsection{Greedy data fusion: two forbidden sets}\label{s:2forbsetsgreedy}
We already analysed the case of exactly two forbidden sets $\{F, F'\}$ in Corollary \ref{c:2forbset}.
We will use this case to shed light on the greedy algorithm.
Recall that $E_F$ refers to the minimal set of edges of the Gomory-Hu tree which connect the vertices of $F$.
Thus removal of any one edge in $E_F$ disconnects $F$ in the tree.
There are two possibilities.

\begin{figure}
\caption{\textbf{Counterexamples to the optimality of the greedy algorithm when $|\mathcal{F}|=2$: }
\textit{Two examples where the forbidden sets are $F:=\{s, s'\}$ and $F':=\{t, t'\}$.
\textbf{Left: }
Graph itself is a tree. $E_F$ and $E_{F'}$ have an edge in common.The optimum is to remove the edge of
weight 6, which disconnects both $F$ and $F'$. However the greedy algorithm
removes the edges of weight 2 and 5, with total cost 7, which is worse.
\textbf{Upper Right: }Graph on five vertices with six edges. 
\textbf{Lower Right: }Its Gomory-Hu tree, in which $E_F$ and $E_{F'}$ are disjoint. 
The greedy algorithm on the Gomory-Hu tree removes the tree edges with weights 9 and 10.
Back in the original graph, this leads to a 2-coloring where edges of weight $8 + 9 = 17$
are cut. The optimum is to cut graph edges of weight $7 + 7 = 14$.
}
} \label{f:greedyfailson2fs}

\begin{center}
\scalebox{0.25}{\includegraphics{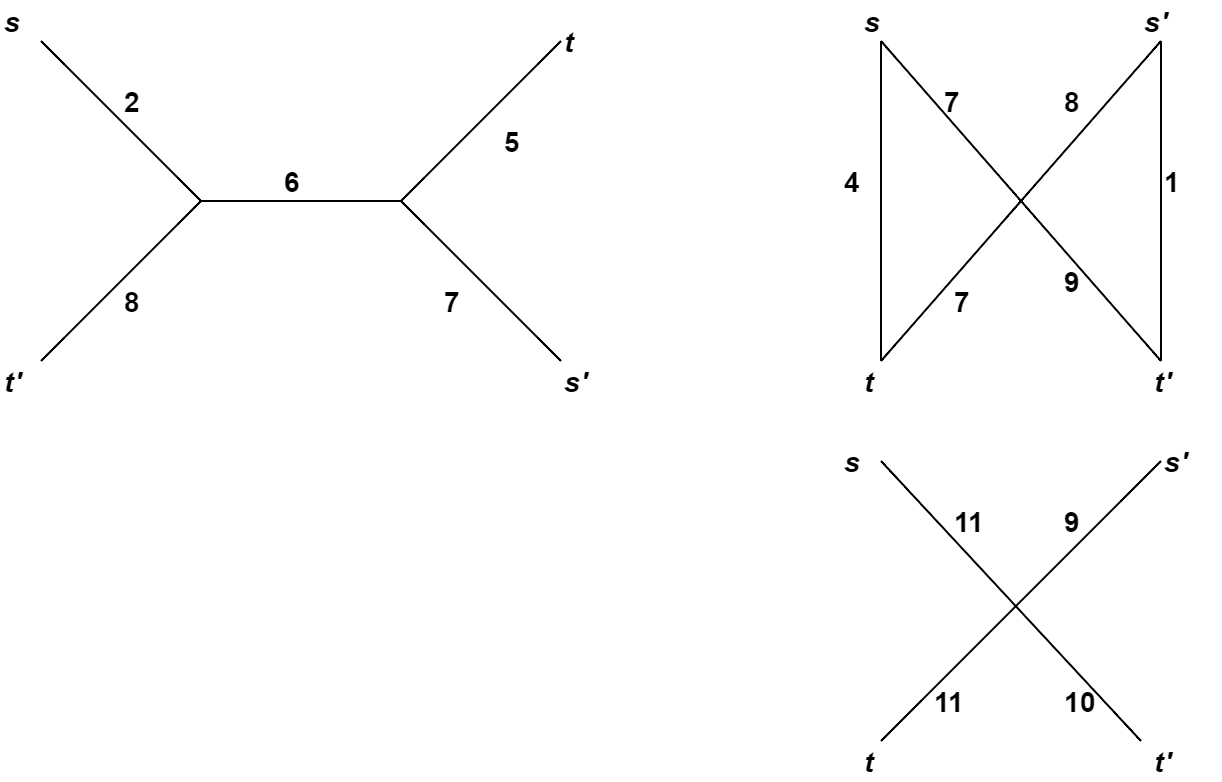}} 
\end{center}

\end{figure}

\begin{itemize}
\item
\textbf{$E_F$ and $E_{F'}$ are disjoint: } In this case there is no single edge of the 
Gomory-Hu tree whose removal disconnects both $F$ and $F'$.
\begin{enumerate}
\item
Figures \ref{f:greedynonopt} and \ref{f:gomoryhu} illustrate one possible outcome of the algorithm.
The two edges removed from the Gomory-Hu tree are $\{v_4,v_8\}$ and $\{v_1,v_4\}$,
each of which disconnects one forbidden set. At the end of the iteration, there are three
components, two of which can be merged, to give the optimum solution illustrated in
Figure \ref{f:greedynonopt}.

\item
The graph on the right in Figure \ref{f:greedyfailson2fs} shows how the greedy algorithm fails.
The greedy algorithm fails to take account of the extra weight of edges (4 or 1 in this case)
which are restored when two of the colors are merged.

\end{enumerate}

\item
\textbf{$E_F$ and $E_{F'}$ have an edge in common: } 
Suppose there is some edge of the 
Gomory-Hu tree whose removal disconnects both $F$ and $F'$.
Let $e_0 \in E_F$, $e_1 \in E_{F'}$, and $e_2 \in E_F \cap E_{F'}$
each have least tree weight among those in their respective edge sets.
Possibly they are not distinct. Evidently
\(
\max { \{\omega_T(e_0), \omega_T(e_1) \} } \leq \omega_T(e_2).
\)
If
\[
\omega_T(e_2) <  2 \min{ \{ \omega_T(e_0), \omega_T(e_1) \} },
\]
then the criterion of step 3(b) above chooses $e_2$ as the first edge to delete.
There are four cases to consider:
\begin{enumerate}
\item
If $2 \min{ \{\omega_T(e_0), \omega_T(e_1) \} }  >  
\omega_T(e_2) = \max { \{\omega_T(e_0), \omega_T(e_1) \} }$, 
then the deletion
of $e_2$ partitions the vertex set $V$ into two components, neither of which
contains $F$ or $F'$, and this is an optimum 2-coloring as provided by Corollary \ref{c:2forbset}.
\item
If  $2 \min{ \{\omega_T(e_0), \omega_T(e_1) \} }  >  \omega_T(e_2)  > \max { \{\omega_T(e_0), \omega_T(e_1) \} }$, then $e_2$ is the first edge
to be deleted by the greedy algorithm, but this need not be optimal (e.g. when $\omega_T(e_0) = \omega_T(e_1)$ and the cuts generated by
$e_0$ and $e_1$ have an edge in common).
\item
If $\omega_T(e_2) > 2 \min{ \{\omega_T(e_0), \omega_T(e_1) \} }$, then the greedy algorithm
will delete one of $e_0$ and $e_1$ followed by the other, possibly giving a non-optimum result, as
in the graph on the left in Figure \ref{f:greedyfailson2fs}.
\item If $\omega_T(e_2) = 2 \min{ \{\omega_T(e_0), \omega_T(e_1) \} }$ then the greedy algorithm will randomly choose to add either $e_2$ or the minimum of $e_0$ and $e_1$ followed by the other. 
As with cases 2 and 3, this need not be optimal.

\end{enumerate}

\end{itemize}

\subsection{Empirical performance of the Gomory-Hu-based algorithm} \label{s:ghuxpts}

In our experiments, we made the following choices as to the number, size, and construction of forbidden sets.
\begin{enumerate}
\item The graph $G = (V, E)$ was the giant component of a random graph
sampled uniformly from those with degree sequence $(D_1, \ldots, D_n)$, where
$(D_1 - 1, \ldots, D_n - 1)$ was a Multinomial$(2m - n, (1/n, \ldots, 1/n))$ random vector.
Typically $|V| \approx n$, and $|E| \approx m$. 

\item The number of forbidden sets was $b = \lceil\log(n)\rceil$.

\item We selected $a = \lceil 0.75 b\rceil$ ``bad'' vertices at random. The other vertices were ``good'' vertices. 

\item Each forbidden set was constructed by selecting one bad vertex uniformly at random, and two good vertices uniformly at random.
The forbidden sets were statistically independent. Since $b > a$, 
there were cases where the same bad vertex occurred in multiple forbidden sets.

\end{enumerate}

We tried the the following two algorithms on several examples:
\begin{enumerate}
\item {\bf A greedy algorithm based on Gomory Hu trees} as described in Section \ref{s:greedygomhu}.

\item {\bf An exhaustive search algorithm among all proper 2-colorings} along the lines of Proposition \ref{p:onef}.
Given a combinatorial data fusion problem $(V, E, w, \mathcal{F})$, compute the union $X$ of all the forbidden sets,
and compute all proper\footnote{
A 2-coloring is proper if and only if no $F\in\mathcal{F}$ is monochromatic.
} 2-colorings of $(X, \mathcal{F})$. 
For each proper 2-coloring, solve the minimum cut problem in $(V \cup \{s, t\}, E\cup E_0 \cup E_1)$,
where $E_0$ consists of edges between a source $s$ and every vertex in $X$ colored 0, and
$E_1$ consists of edges between a sink $t$ and every vertex in $X$ colored 1.
Output a 2-coloring of X for which the weight of this minimum cut is least.
Note that this algorithm delivers an optimum if and only if there exists an optimum solution in which there are just two colors.
Indeed if there are no feasible 2-colorings then the algorithm will fail.
\end{enumerate}

\begin{table}
\begin{center}
\begin{tabular}{|c|c|c|c|c|c|c|}
\hline
 & Graph Size                    & $|\mathcal{F}|$    & Cut Weight     & Time (secs)  		& Cut Weight   & Time (secs)\\
 & $({\rm Nodes}, {\rm Edges})$  &     	   & (2-color opt.) & (2-color opt.)    & (Greedy)     & (Greedy) \\
\hline
1 & $(60,90)$                    & 5     	   & 2.01           & 0.2     		& 2.01         & 0.02\\
\hline
2 & $(64, 192)$                  & 5    	   & 14.93          & 1.2     		& 14.93        & 0.02\\
\hline
3 & $(1024, 1536)$               & 7    	   & 4.15           & 562      		& 4.51         & 0.4\\
\hline
4 & $(1K, 3K)$                  & 7  		   & 11.72     	    & 1900    		& 11.83        & 0.4\\
\hline
6 & $(32K, 98K)$                 & 11 		   & \mbox{n/a}     & \mbox{n/a}       	& 18.47        &  6948\\
\hline
\end{tabular}
\caption{{\bf 2-color optimum vs. Gomory-Hu Based Greedy Algorithm:} Code run in Java 1.8 on a single core,
on top of the \texttt{JGraphT} implementation of Gomory-Hu Tree.} \label{comparetab}
\end{center}
\end{table}

Observe in Table \ref{comparetab} that the cut weight obtained by both algorithms is close in all examples, and identical in examples 1 and 2.
In all examples the greedy algorithm delivers a solution with two colors, but in problems 3 and 4
it fails to find an optimum coloring.

The compute time for exhaustive 2-coloring reflects exponential growth in the number of proper 2-colorings
of the union of the forbidden sets, and a quadratic growth in the time to solve a single minimum cut problem on the whole of $(V, E, w)$.
We believe that the compute time for the greedy algorithm is dominated by the time to construct a Gomory-Hu tree.

\section{The multiway cut problem}

\subsection{Review} \label{s;muliwaycut}

Return to the multiway cut case, mentioned in Section \ref{s:mmwcut},
 where the forbidden hypergraph $\mathcal{F}$ consists
of the complete graph on a proper subset $T \subset V$.
Assume $|T| \geq 3$, since cases where $|T| < 2$ are vacuous, and the case where $|T|= 2$ 
 is the standard Minimum Cut problem on graphs.
 No edge in $E_0$ can have both endpoints in $T$, but, since $G$ is connected, 
every vertex in $T$ is incident to at least one edge in $E_0$.
A hypergraph coloring for $(V, \mathcal{F})$, in the sense of Section \ref{s:fhg}, means a 
vertex partition $U_1, U_2, \ldots \subset V$ such that $|U_i \cap T| \leq 1$ for all $i$. 
We seek such a partition for which the total weight of edges with endpoints
in different $(U_i)$ is minimum.
By Lemma \ref{l:equiv}, this is the same as a maximum
weight edge subset $E_1 \subset E_0$ such that no component of the graph $(V, E_1)$
contains more than one element of $T$; we seek to remove
edges of minimum total weight, such that
each of the vertices in $T$ must be in a different graph component after the cut.

\subsubsection{Application: vertex label anomaly correction} \label{s:anomaly}
Darling and Velednitsky \cite{dar} consider a large bipartite graph encoding transactional data. In an abstract sense, the left vertex set $L$
consists of clients, the right vertex set $R$ consist of servers, and each edge records the transaction
volume between a specific client - server pair.
Furthermore there is a set $C$ of categories, and each right vertex is connected by a weighted edge to exactly one of these categories.
The complete vertex set is $V = L \cup R \cup C $, and the edge set $E_0$ includes both the transaction records and the category labelling
of right vertices.
The ideal state would be that each component of this graph contains exactly one vertex of $C$, which leads to a partition of $L \cup R$
among the categories. If this is not the case, we seek a maximum weight subset $E_1 \subset E_0$ for which this condition holds.
This is a multiway cut problem: all vertices in $C$ must belong to different components.

\subsection{Matroid perspective}

Given $T \subset V$ with $2 \leq |T| < |V|$, and given a connected weighted graph
$G=(V, E_0, w)$ such that no edge in $E_0$ has both endpoints in $T$,
define $\mathcal{E}_T$ to be the collection of subsets $X \subseteq E_0$ such that:
\begin{enumerate}
\item
$(V, X)$ is a forest, and
\item
None of the tree components of the graph $(V, X)$
has more than one vertex in $T$.

\end{enumerate}

\begin{figure}
\caption{\textbf{Construction for matroid proof in Proposition \ref{p:mwcmatroid} }
\textit{The set $T=\{t_1, t_2, t_3\}$. The auxiliary vertex is $t_0$.
}
} \label{f:mwcutcyclematroid}

\begin{center}
\scalebox{0.25}{\includegraphics{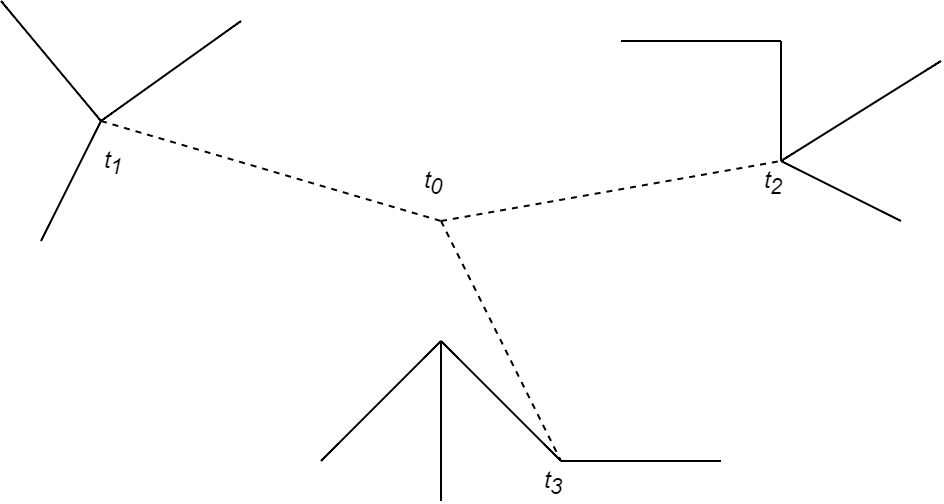}} 
\end{center}

\end{figure}

We call $\mathcal{E}_T$ the \textbf{multiway cut cycle matroid}, because of
Proposition \ref{p:mwcmatroid}.

\begin{proposition} \label{p:mwcmatroid}
$\mathcal{E}_T$ is a matroid.
\end{proposition}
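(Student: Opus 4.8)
The plan is to show that $\mathcal{E}_T$ satisfies the matroid exchange axiom directly, using the well-known fact that the ordinary graphic (cycle) matroid of $G=(V,E_0)$ is a matroid, and that $\mathcal{E}_T$ is an intersection of that matroid structure with a partition-matroid-like constraint on the $T$-vertices. First I would establish that $\mathcal{E}_T$ is a well-defined independence system: the empty set lies in $\mathcal{E}_T$, and deleting an edge from a forest whose components each meet $T$ in at most one vertex leaves a forest with the same property (components only get smaller), so $\mathcal{E}_T$ is closed under taking subsets.

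The main work is the exchange property: if $X, Y \in \mathcal{E}_T$ with $|X| < |Y|$, find $e \in Y \setminus X$ with $X \cup \{e\} \in \mathcal{E}_T$. The clean way, matching the figure with auxiliary vertex $t_0$, is the following reduction. Form the augmented graph $G^+ = (V \cup \{t_0\}, E_0 \cup E_T^+)$ where $E_T^+ = \{\{t_0, t\} : t \in T\}$. Given $X \in \mathcal{E}_T$, the graph $(V, X)$ is a forest in which each component contains at most one vertex of $T$; adding to $X$ exactly one auxiliary edge $\{t_0, t\}$ for each component that does contain a $T$-vertex, and one auxiliary edge for... — more carefully: attach $t_0$ to one representative vertex of $T$ in each component that meets $T$. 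This shows $X$ together with a suitable set of auxiliary edges is a forest in $G^+$, and conversely a set $X \subseteq E_0$ lies in $\mathcal{E}_T$ if and only if $X \cup E_T^+$ contains no cycle in $G^+$, i.e.\ $X$ is independent in the cycle matroid of $G^+$ \emph{contracted} at the edges $E_T^+$ — equivalently, $X$ is independent in the matroid $(M(G^+) / E_T^+)$ restricted to the ground set $E_0$. Since contraction of a matroid is a matroid and restriction of a matroid is a matroid, $\mathcal{E}_T$ is a matroid.

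Concretely, I would argue: $(V,X)$ has no $T$-fat component iff we can add all $|T|$ auxiliary edges $\{t_0,t\}$ to $X$ without creating a cycle, because a cycle through $t_0$ uses exactly two auxiliary edges $\{t_0,t\}, \{t_0,t'\}$ and the part of the cycle inside $V$ is a $t$–$t'$ path in $(V,X)$, which exists precisely when $t,t'$ share a component; and a cycle avoiding $t_0$ would already be a cycle in the forest $(V,X)$, impossible. Hence $\mathcal{E}_T = \{X \subseteq E_0 : X \cup E_T^+ \text{ is acyclic in } G^+\}$, and this is exactly the family of independent sets of $M(G^+)/E_T^+$ restricted to $E_0$ (noting $E_T^+$ is itself independent in $M(G^+)$ since $t_0$ has degree $|T|$ and those edges form a star, hence a forest). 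Therefore $\mathcal{E}_T$ is a matroid, being obtained from the graphic matroid $M(G^+)$ by the matroid operations of contraction and restriction.

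The main obstacle is getting the "if and only if" characterization exactly right, in particular handling components of $(V,X)$ that contain \emph{no} vertex of $T$ (these cause no constraint and correspondingly never connect to $t_0$), and making sure the count of auxiliary edges is $|T|$ rather than "one per component" — the key observation being that each $t \in T$ gets its own auxiliary edge $\{t_0,t\}$, so acyclicity of $X \cup E_T^+$ forces distinct $T$-vertices into distinct components, which is precisely condition 2 in the definition of $\mathcal{E}_T$. Once that equivalence is pinned down, invoking closure of matroids under contraction and restriction finishes the proof; alternatively one can spell out the exchange axiom by a standard augmenting-path argument in $M(G^+)$, but the reduction is cleaner.
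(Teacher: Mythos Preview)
Your proposal is correct and follows essentially the same route as the paper: introduce the auxiliary vertex $t_0$ with star edges $E_T^+$ to all of $T$, show that $X \in \mathcal{E}_T$ if and only if $X \cup E_T^+$ is acyclic in the augmented graph, and conclude that $\mathcal{E}_T$ is the contraction $M(G^+)/E_T^+$ of a graphic matroid (the restriction to $E_0$ is redundant since contracting $E_T^+$ already leaves ground set $E_0$). The paper packages the ``if and only if'' as a separate lemma and then invokes matroid contraction in the proof proper, but the content is the same as yours.
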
 

Before proving Proposition \ref{p:mwcmatroid}, we introduce an auxiliary construction,
illustrated in Figure \ref{f:mwcutcyclematroid}.
Let $T=\{t_1, t_2, \ldots, t_k\}$. Introduce a new graph $G':=(V', E')$, on the vertex set
$V':= V \cup \{t_0\}$, where $t_0$ is a new vertex not in $V$,
and with edge set
\[
E':= E_0 \cup E_*; \quad E_*:=\{ \{t_0, t_i\}_{1 \leq i \leq k} \} .
\]
In other words, $E'$ is obtained by augmenting $E_0$ with a star centered at the new vertex $t_0$,
connected to each of the vertices in $T$; see Figure \ref{f:mwcutcyclematroid}.

\begin{lemma}
There is a bijection between $\mathcal{E}_T$ and the edge sets $Z \supseteq E_*$, for which
$(V', Z)$ is a forest contained in $(V', E')$.
\end{lemma}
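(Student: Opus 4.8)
The plan is to make the bijection completely explicit: send $X \in \mathcal{E}_T$ to $Z := X \cup E_*$, and send an edge set $Z$ with $E_* \subseteq Z \subseteq E'$ back to $X := Z \setminus E_*$. Since $t_0 \notin V$, no edge of $E_*$ lies in $E_0$, so these two set operations are trivially mutually inverse on the families $\{X \subseteq E_0\}$ and $\{Z : E_* \subseteq Z \subseteq E'\}$; all the actual work is in checking that each map respects the side conditions, after which the bijection statement follows formally.

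First I would verify the forward direction: if $X \in \mathcal{E}_T$ then $(V', X \cup E_*)$ is a forest contained in $(V',E')$. Containment is immediate since $X \cup E_* \subseteq E_0 \cup E_* = E'$. For the forest property, suppose $(V', X\cup E_*)$ contained a cycle $C$. Because $(V,X)$ is a forest, $C$ must use an edge of $E_*$ and hence pass through $t_0$; a cycle is $2$-regular, and every edge incident to $t_0$ lies in $E_*$, so $C$ uses exactly two such edges, say $\{t_0, t_i\}$ and $\{t_0, t_j\}$ with $t_i \neq t_j$ in $T$. Deleting $t_0$ from $C$ leaves a $t_i$--$t_j$ path inside $(V,X)$, putting $t_i$ and $t_j$ in a common tree component of $(V,X)$ — contradicting the second defining condition of $\mathcal{E}_T$.

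Next I would verify the reverse direction: if $E_* \subseteq Z \subseteq E'$ and $(V',Z)$ is a forest, then $X := Z \setminus E_* = Z \cap E_0 \subseteq E_0$ lies in $\mathcal{E}_T$. The graph $(V,X)$ arises from the forest $(V',Z)$ by deleting the vertex $t_0$ together with all its incident edges (which are exactly $E_*$), so it is again a forest. If some component of $(V,X)$ contained two distinct vertices $t_i, t_j \in T$, then a $t_i$--$t_j$ path in $(V,X)$ together with the edges $\{t_0, t_i\}, \{t_0, t_j\} \in E_* \subseteq Z$ would close a cycle in $(V',Z)$, a contradiction; hence each component meets $T$ in at most one vertex. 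Finally $(X \cup E_*) \setminus E_* = X$ because $X$ is disjoint from $E_*$, and $(Z \setminus E_*) \cup E_* = Z$ because $E_* \subseteq Z$, so the two maps are mutually inverse, which is exactly the claimed bijection.

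I do not anticipate a genuine obstacle here: the entire argument rests on the single observation that, in $G'$, a cycle is present if and only if two vertices of $T$ are joined by a path of $E_0$-edges avoiding $t_0$, i.e. lie in the same component of $(V,X)$. The only point worth stating carefully is the degree count at $t_0$ — a cycle meets $t_0$ in precisely two $E_*$-edges, so excising $t_0$ yields an honest path rather than a more complicated subgraph — and this is immediate from $2$-regularity of cycles and the fact that all neighbors of $t_0$ lie in $T$.
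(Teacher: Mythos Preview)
Your proof is correct and uses the same bijection $X \leftrightarrow X \cup E_*$ as the paper. The paper argues the forward direction by decomposing $X$ into its components $X_0, X_1, \ldots, X_k$ and observing that adjoining $E_*$ glues the $T$-containing trees into one cycle-free component through $t_0$, then simply asserts the argument ``can be reversed''; your cycle-through-$t_0$ contradiction argument is a cleaner packaging of the same idea and has the advantage of spelling out the reverse direction explicitly.
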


\begin{proof}
Given $X \in \mathcal{E}_T$, define $X_i \subset X$
to be those edges in the component containing $t_i$, for $1 \leq i \leq k$.
In other words, for $1 \leq i \leq k$, $X_i \subset X$ is maximal such that
 $(V(X_i), X_i)$ is connected and $V(X_i) \ni t_i$.
Let $X_0$ denote the edges in all components with no vertices in $T$. Hence
\[
X = X_0 \cup X_1 \cup \cdots \cup X_k
\]
is a partition of $X$. By definition of $\mathcal{E}_T$,  none of the graphs
$(V(X_i), X_i)_{0 \leq i \leq k}$ contain a cycle. The graph $(V', E_*  \cup X)$
contains a cycle-free component with edge set $E_*  \cup (X \setminus X_0)$,
whose vertex set is
\[
\{t_0, t_1, t_2, \ldots, t_k\} \cup \bigcup_{i=1}^k V(X_i).
\]
In this component, the unique path between $t_i$ and $t_j$ for $1 \leq i < j \leq k$ passes through $t_0$.
The forest $(V, X_0)$ is unaffected by adding edges in $E_*$, and remains cycle-free. Hence 
$(V', Z:=E_*  \cup X)$ is a forest as claimed. The same argument can be reversed to give
the desired bijection.
\end{proof}

\subsection{Proof of Proposition \ref{p:mwcmatroid}}
\begin{proof}
\textbf{Step I: cycle matroid: }
For the sake of completeness, we recapitulate from \cite{kor}, Proposition 13.4, 
the short proof that for any graph $(V, E)$,
the collection $\mathcal{T}$, consisting of subsets $X\subseteq E$ for which $(V, X)$ is a forest,
 is a matroid. It is clear $\mathcal{T}$ is an independence system.
Suppose $X$ and $Y$ both belong to $\mathcal{T}$, but $X \cup \{e \} \notin \mathcal{T}$
for all $e \in Y$. For every edge $e$ in $Y$, both endpoints of $e$ are in the same component
of $(V, X)$, so each connected component of $(V, Y)$ is a subset of a connected component of $(V, X)$.
Hence $|C(X)| \leq |C(Y)|$, where $C(X)$ denotes the connected components of $(V, X)$.
It follows that
\[
|X| = |V| - |C(X)| \geq |V| - |C(Y)| = |Y|.
\]
So if $X, Y \in \mathcal{T}$ and $|X| < |Y|$, then $X \cup \{e \} \in \mathcal{T}$
for some $e \in Y$, proving that $\mathcal{T}$ is a matroid. It is called the \textbf{cycle matroid}
of $(V, E)$

\textbf{Step II: matroid contraction: }
To conclude, we show that $(E_0, \mathcal{E}_T)$ is the contraction of another matroid, and therefore a matroid.
First recall some terminology. 
If $M = (E,I)$ is a matroid and $X \subset E$ then the \textbf{deletion matroid} $M \setminus X = ( E \setminus X, I')$ 
where $I'  = \{ Z | Z \subset (E \setminus X), Z \in I\}$.
The \textbf{dual matroid} $M^* = (E, I^*)$, where $I^*$ consists of those sets $F \subset E$ for which
there is a base $B$ of $M$ such that $F \cap B = \emptyset$. For $X \subset E$, the 
\textbf{matroid contraction} $M / X = (M^* \setminus X)^*$.
Note that if $X \subset E$, and $Z$ is a basis of $X$, then a subset $B$ of $E \setminus X$ is independent in 
$M / X$ if and only if $B \cup Z$ is independent in $M=(E,I)$.  

In this setting, take $M = (E',I)$ to be the cycle matroid of $(V', E_* \cup E_0)$ .  
Since $E_*$ is independent in $M$ it is a base for itself. 
Therefore the independent sets of the $M / E_*$ are exactly the forests of $G$ with 
the $t_i$ in different components (A forest $F$ in $G$ with $t_0$ and 
the edges of $E_*$ added is acyclic if and only if the $t_i$ are in different components in $F$.)
Hence the matroid $M / E_*$ is $(E_0, \mathcal{E}_T)$ as desired.
\end{proof}

\subsection{Greedy multiway cut forest algorithm}

As for any matroid, the \textbf{Best-In Greedy Algorithm} takes the following form.
\begin{enumerate}
\item
Sort $E_0 = \{e_1, e_2, \ldots, e_n\}$  such that $w(e_1) \geq w(e_2) \geq \cdots \geq w(e_n)$.

\item
Set $X = \emptyset$.

\item
For $i = 1$ to $n$, \textbf{do: } if $X \cup \{e_i\} \in \mathcal{E}_T$, then set $X:= X \cup \{e_i\}$.

\end{enumerate}

The context here is multiway cut with respect to $T \subset V$ with $2 \leq |T| < |V|$.
The connected weighted graph $G=(V, E_0, w)$ has no edge in $E_0$ with both endpoints in $T$.
In practice, Step 3. entails checking that the next edge $e_i$ neither causes a cycle, nor causes
two elements of $T$ to fall in the same component.

Proposition \ref{p:mwcmatroid} has the Corollary:

\begin{corollary} 
Suppose the forbidden hypergraph consists
of the complete graph on a proper subset $T \subset V$ with $2 \leq |T| < |V|$, and 
the connected weighted graph $G=(V, E_0, w)$ has no edge in $E_0$ with both endpoints in $T$.
The Best-In Greedy Algorithm finds an optimum solution to the problem of
finding a maximum weight forest $E_1 \subset E_0$ such that no component of $(V, E_1)$ contains
more than one element of $T$.
\end{corollary}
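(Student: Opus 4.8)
The plan is to observe that the optimization problem in the statement is \emph{literally} the maximization problem for the independence system $(E_0, \mathcal{E}_T)$ introduced in the previous subsection, and then to combine Proposition \ref{p:mwcmatroid} with the classical theorem of Rado and Edmonds (Korte and Vygen \cite{kor}, in the chapter on matroids): the Best-In Greedy Algorithm solves the maximization problem over an independence system with every nonnegative cost function if and only if that independence system is a matroid. So essentially no new work is required beyond an identification of objects.

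First I would verify the identification. By the definition of $\mathcal{E}_T$, a subset $E_1 \subseteq E_0$ is a forest of $G$ with the property that no tree component of $(V, E_1)$ contains more than one vertex of $T$ if and only if $E_1 \in \mathcal{E}_T$. Hence ``find a maximum weight forest $E_1 \subseteq E_0$ such that no component of $(V,E_1)$ contains more than one element of $T$'' is exactly ``maximize $\sum_{e \in E_1} w(e)$ over $E_1 \in \mathcal{E}_T$''. Moreover, as remarked when the algorithm was stated, the membership test $X \cup \{e_i\} \in \mathcal{E}_T$ in Step 3 amounts to checking that adding $e_i$ creates neither a cycle nor a second $T$-vertex inside some component, so the displayed procedure is precisely the Best-In Greedy Algorithm for the independence system $(E_0, \mathcal{E}_T)$.

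Second, I would invoke Proposition \ref{p:mwcmatroid}, which asserts that $\mathcal{E}_T$ is a matroid, together with the Rado--Edmonds theorem quoted above; since the weight function $w$ is strictly positive, the independent set returned by Best-In Greedy is in fact a maximum-weight base. This yields the corollary. If one preferred a self-contained argument rather than a citation, the only additional routine step would be to reproduce the short proof that greedy is optimal on a matroid: at each stage, use the matroid exchange axiom to compare the greedy choice against an arbitrary independent set of equal size, showing inductively that greedy never falls behind.

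There is no genuine obstacle here; the substance of the result lies entirely in Proposition \ref{p:mwcmatroid}, already proved via the contraction $M / E_*$ of the cycle matroid of $(V', E_* \cup E_0)$. The present corollary is purely a matter of recognizing the problem as a matroid maximization and citing the standard greedy characterization from \cite{kor}.
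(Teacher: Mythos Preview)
Your proposal is correct and matches the paper's own proof essentially verbatim: the paper simply notes that $\mathcal{E}_T$ is a matroid by Proposition \ref{p:mwcmatroid} and then invokes the Edmonds--Rado Theorem from \cite[Theorem 13.20]{kor}. Your additional identification step and remark about the membership test are accurate elaborations but add nothing beyond what the paper does.
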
 

Call this $E_1$ an \textbf{optimum multiway cut forest}.

\begin{proof}
Since $\mathcal{E}_T$ is a matroid by Proposition \ref{p:mwcmatroid},
the Corollary follows from the Edmonds-Rado Theorem, \cite[Theorem 13.20]{kor}.
\end{proof}

\textbf{Remark: }
Costa \& Billionnet \cite{cos} have given a construction which solves 
the multiway cut problem on trees in linear time. 
The Best-In Greedy Algorithm falls short of linear time in that (1)
the sort alone is $O(|V| \log (|V|))$. and (2)
for the overall runtime to be linear in $|V|$, 
the check for each proposed edge would need to be done in constant time.

\textbf{Acknowledgments: } The authors thank Imtiaz Manji and Adam Logan of the Tutte Institute of
 Mathematics and Computing for insightful comments, and the organizers and participants
of STAMP 2017 for encouraging this work.


\end{document}